\newtheorem{proposition}{Proposition}
\newtheorem{lemma}{Lemma}
\newtheorem*{thmA}{Theorem A}
\newtheorem*{thmB}{Theorem B}
\newtheorem*{thmC}{Theorem C}
\newtheorem*{thmD}{Theorem D}
\newtheorem*{cconj}{Chern Conjecture}
\newtheorem*{mthm}{Main Theorem}
\newtheorem*{cconja}{Conjecture A}
\newtheorem*{cconjb}{Conjecture B}
\begin{document}

\title[A new characterization of the Clifford torus]{A new characterization of the Clifford torus via scalar curvature
pinching}
\thanks{Research supported by the Chinese NSF, Grant
No. 11071211; the Trans-Century Training Programme Foundation for
Talents by the Ministry of Education of China.}

\author{Hong-wei Xu}
\author{Zhi-yuan Xu}

\address{Center of Mathematical Sciences \\ Zhejiang University \\ Hangzhou 310027 \\
China} \email{xuhw@cms.zju.edu.cn; srxwing@zju.edu.cn}
\date{}
\keywords{Hypersurfaces with constant mean curvature, Rigidity,
Scalar curvature, Clifford torus.}
\subjclass[2000]{53C24; 53C40}

\numberwithin{equation}{section}

\maketitle

\begin{abstract}
Let $M^n$ be a compact hypersurface with constant mean curvature $H$
in $\mathbb{S}^{n+1}$. Denote by $S$ the squared norm of the second
fundamental form of $M$. We prove that there exists a positive
constant $\gamma(n)$ depending only on $n$ such that if
$|H|\leq\gamma(n)$ and $\beta(n,H)\leq
S\leq\beta(n,H)+\frac{n}{23}$, then $S\equiv\beta(n,H)$ and $M$ is
one of the following cases: (i)
$\mathbb{S}^{k}(\sqrt{\frac{k}{n}})\times
\mathbb{S}^{n-k}(\sqrt{\frac{n-k}{n}})$, $\,1\le k\le n-1$; (ii)
$\mathbb{S}^{1}(\frac{1}{\sqrt{1+\mu^2}})\times
\mathbb{S}^{n-1}(\frac{\mu}{\sqrt{1+\mu^2}})$. Here
$\beta(n,H)=n+\frac{n^3}{2(n-1)}H^2+\frac{n(n-2)}{2(n-1)}\sqrt{n^2H^4+4(n-1)H^2}$
and $\mu=\frac{n|H|+\sqrt{n^2H^2+4(n-1)}}{2}$. This provides a new
characterization of the Clifford torus.
\end{abstract}

\section{Introduction}\label{sec1}
\par Let $M^n$ be an $n$-dimensional compact
hypersurface with constant mean curvature in an $(n+1)$-dimensional
unit sphere $\mathbb{S}^{n+1}$. Denote by $R$, $H$ and $S$ the
scalar curvature, the mean curvature and the squared norm of the
second fundamental form of $M$, respectively. It follows from the
Gauss equation that $R=n(n-1)+n^2H^{2}-S$ . A famous rigidity
theorem due to  Simons, Lawson, and Chern, do Carmo and Kobayashi
(\cite{CDK}, \cite{L1}, \cite{S}) says that if $M$ is a closed
minimal hypersurface in $\mathbb{S}^{n+1}$ satisfying $S\leq n$,
then $S\equiv0$ and $M$ is the great sphere $\mathbb{S}^{n}$, or
$S\equiv n$ and $M$ is the Clifford torus
$\mathbb{S}^{k}(\sqrt{\frac{k}{n}})\times
\mathbb{S}^{n-k}(\sqrt{\frac{n-k}{n}})$, $\,1\le k\le n-1$.
Afterward, Li-Li \cite{LL} improved Simons' pinching constant for
$n$-dimensional closed minimal submanifolds in $\mathbb{S}^{n+p}$ to
$\mathrm{max}\{\frac{n}{2-1/p},\frac{2}{3}n\}$. Further developments
on this rigidity theorem have been made by many other authors (see
\cite{CY}, \cite{D}, \cite{L}, \cite{X1}, \cite{X2}, \cite{Y1},
etc.). In 1970's, Chern proposed the following conjecture.

\begin{cconj}\label{cj} {\rm (See \cite{CDK}, \cite{Y2})}
Let $M$ be a compact minimal hypersurface in the unit sphere $\mathbb{S}^{n+1}$.\\
(A) If $S$ is constant, then the possible values of $S$ form a
discrete set. In particular, if $n\leq
S\leq2n$, then $S=n$, or $S=2n$.\\
(B) If $n\leq S\leq 2n$, then $S\equiv n$, or $S\equiv 2n$.
\end{cconj}

\par In 1983, Peng and Terng (\cite{PT1}, \cite{PT2}) made a breakthrough on the Chern
conjecture, and proved the following

\begin{thmA}\label{thm1}
Let $M$ be a compact minimal hypersurface in the unit sphere $\mathbb{S}^{n+1}$.\\
(i) If $S$ is constant, and if $n\leq S\leq n+\frac{1}{12n}$, then
$S=n$.\\
(ii) If $n\le 5$, and if
$n\leq S\leq n+\tau_1(n)$, where
$\tau_1(n)$ is a positive constant depending only on $n$, then
$S\equiv n$.
\end{thmA}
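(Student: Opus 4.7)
The plan is to set up Simons' identity for a closed minimal hypersurface of $\mathbb{S}^{n+1}$, namely
\begin{equation*}
\tfrac{1}{2}\Delta S \;=\; |\nabla h|^{2} + (n-S)\,S,
\end{equation*}
where $h$ denotes the second fundamental form, and then iterate the Bochner machinery to obtain second-order identities for $|\nabla h|^{2}$ and for the cubic and quartic traces $f_{3}=\sum h_{ij}h_{jk}h_{ki}$, $f_{4}=\sum h_{ij}h_{jk}h_{kl}h_{li}$. Integrating Simons' identity over the closed manifold $M$ kills the Laplacian and gives $\int_{M}|\nabla h|^{2}=\int_{M}S(S-n)$, so in the pinching regime $S\geq n$ the question becomes whether the gradient term can actually survive, and the task is to rule that out.

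For part (i), where $S$ is assumed constant, the pointwise identity $|\nabla h|^{2}=S(S-n)$ holds and $|\nabla h|^{2}$ is itself constant. I would then compute $\Delta|\nabla h|^{2}$ via a Peng--Terng type formula, which expresses it as $2|\nabla^{2} h|^{2}$ plus cubic and quartic polynomial terms in $h$ paired with $h$ and $\nabla h$. Integrating makes the left side vanish, giving an integral equation for $\int_{M}|\nabla^{2} h|^{2}$ in terms of $\int_{M}f_{3}\cdot(\ldots)$ and $\int_{M}S\,|\nabla h|^{2}$. Combining this with a Kato-type inequality $|\nabla|\nabla h||^{2}\le c\,|\nabla^{2} h|^{2}$ and the sharp algebraic bound $|f_{3}|\le \tfrac{n-2}{\sqrt{n(n-1)}}\,S^{3/2}$ (Okumura) should, under $S-n\le \tfrac{1}{12n}$, force $|\nabla h|\equiv 0$ and hence $S\equiv n$.

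For part (ii), the constancy of $S$ is dropped, so the same Bochner chain must be carried out with extra derivative terms that are no longer pointwise cancelled. One integrates everything over $M$ (no cut-off is needed since $M$ is closed), combines Simons' identity with a linear combination of the gradient identities for $|\nabla h|^{2}$ and $f_{3}$, and then reapplies the Kato and Okumura inequalities. The dimensional restriction $n\le 5$ enters precisely because the coefficients emerging from these two inequalities combine with the right sign only when $n$ is small; writing out the resulting coefficient and requiring positivity yields a condition that holds for $n\le 5$ and determines the constant $\tau_{1}(n)$.

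The main obstacle I anticipate is the bookkeeping: the positive quartic and cubic trace terms $\int_{M}f_{4}\cdot(\ldots)$ and $\int_{M}f_{3}^{2}$ that arise from $\Delta|\nabla h|^{2}$ can a priori dominate the negative contribution $-(S-n)\cdot(\ldots)$ that wants to push $S$ down to $n$. Finding the right linear combination of Bochner-type identities so that the dangerous terms cancel, and then applying the sharp pointwise algebraic inequalities to what remains, is what produces the explicit pinching constant $\tfrac{1}{12n}$ in (i) and the dimension-restricted constant $\tau_{1}(n)$ in (ii). I therefore expect the bulk of the effort to lie in this combinatorial and algebraic balancing step rather than in any single analytic estimate.
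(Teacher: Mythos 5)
The paper does not prove Theorem~A: it is Peng--Terng's 1983 result, stated purely as background and cited to \cite{PT1}, \cite{PT2}. So there is no ``paper's own proof'' to compare against; I can only assess your sketch against the published Peng--Terng argument, whose skeleton is visible in the identities the paper \emph{does} record, namely (\ref{2.8}), (\ref{2.9}), (\ref{2.11}), (\ref{2.12}).

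Your outer framework is right (Simons' identity, Bochner for $|\nabla h|^2$, integration, algebraic balancing), but the technical heart is missing. In the minimal, constant-$S$ case one has $|\nabla h|^2 = S(S-n)$ constant, and $\tfrac12\Delta|\nabla h|^2 = 0$ turns (\ref{2.9}) (with $H=0$, $\nabla S=0$) into
\begin{equation*}
0 \;=\; (2n+3-S)\,|\nabla h|^2 \;+\; |\nabla^2 h|^2 \;+\; 3(2B-A),
\end{equation*}
where $A=\sum h_{ijk}^2\lambda_i^2$ and $B=\sum h_{ijk}^2\lambda_i\lambda_j$. The entire pinching constant comes from a \emph{pointwise upper bound on $A-2B$ in terms of $S\,|\nabla h|^2$} (the Peng--Terng cubic-form lemma; see (\ref{2.12}) and Section~3 for the refinements this paper needs). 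Your sketch never introduces these curvature-weighted gradient sums, so there is nothing that can produce the numerical constant $\tfrac1{12n}$. Two further points: the Okumura bound $|f_3|\le \tfrac{n-2}{\sqrt{n(n-1)}}S^{3/2}$ is essentially irrelevant here, because for $H=0$ the $nHf_3$ and $3nHC$ terms drop out of (\ref{2.8})--(\ref{2.9}); and the Kato inequality $|\nabla|\nabla h||^2\le|\nabla^2 h|^2$ points the wrong way — what one actually needs is a \emph{lower} bound on $|\nabla^2 h|^2$ by explicit components of the fourth covariant derivative (compare (\ref{2.11})), which is a different and more delicate algebraic fact. For part (ii) the same gap persists: the restriction $n\le 5$ in Peng--Terng arises from the sign of the coefficient multiplying $|\nabla h|^2$ after the $A-2B$ estimate is inserted into the integrated Bochner identity, and without that estimate there is no mechanism for the dimension to enter.
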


During the past three decades, there have been some important
progress on these aspects(see \cite{C},  \cite{CI}, \cite{Cheng},
\cite{DX}, \cite{SY}, \cite{WX}, \cite{XU1}, \cite{YC2}, \cite{Z1},
etc.). In 1993, Chang \cite{C} proved Chern Conjecture (A) in
dimension three. Yang-Cheng \cite{YC2} improved the pinching
constant $\frac{1}{12n}$ in Theorem A(i) to $\frac{n}{3}$. Later,
Suh-Yang \cite{SY} improved this pinching constant to
$\frac{3}{7}n$.

\par In 2007, Wei and
Xu \cite{WX} proved that if $M$ is a compact minimal hypersurface in
$\mathbb{S}^{n+1}$, $n=6, 7$, and if $n\leq S\leq n+ \tau_2(n)$,
where $\tau_2(n)$ is a positive constant depending only on $n$, then
$S\equiv n$. Later, Zhang \cite{Z1} extended the second pinching
theorem due to Peng-Terng \cite{PT2} and Wei-Xu \cite{WX} to the
case of $n=8$. Recently, Ding and Xin \cite{DX} obtained the
striking result, as stated

\begin{thmB}\label{thm1}
Let $M$ be an $n$-dimensional compact minimal hypersurface in the
unit sphere $\mathbb{S}^{n+1}$, and $S$ the squared length of the
second fundamental form of $M$. If $n\geq6$, and if $n\leq S\leq
\frac{24}{23}n$, then $S\equiv n$, i.e., $M$ is a Clifford torus.
\end{thmB}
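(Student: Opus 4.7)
My plan is to follow the Peng--Terng template of playing off two Simons-type integral identities against each other, and then to squeeze out the sharp constant $\tfrac{24}{23}$ by a careful algebraic analysis. The starting point is the classical Simons identity for a minimal hypersurface in $\mathbb{S}^{n+1}$,
$$\tfrac{1}{2}\Delta S \;=\; |\nabla A|^2 + S(n-S).$$
Since $M$ is closed, integrating gives $\int_M |\nabla A|^2 = \int_M S(S-n)$. Under the pinching $n\leq S\leq\tfrac{24}{23}n$ the right-hand side is nonnegative and small, and vanishes exactly when $S\equiv n$. Thus the whole game is to force the covariant-derivative energy $\int_M |\nabla A|^2$ down to zero.

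The second ingredient is the Simons-type identity for $\tfrac{1}{2}\Delta|\nabla A|^2$, which, after integration and use of the Codazzi equation plus the Ricci commutator formula, produces an inequality schematically of the form
$$\int_M |\nabla^2 A|^2 \;\leq\; \int_M \bigl[(2S-n-3)|\nabla A|^2 + Q(A,\nabla A)\bigr],$$
where $Q$ collects cubic contractions such as $\sum h_{ij}h_{jkl}h_{ikl}$ and $\sum h_{ijk}h_{ijl}h_{kl}$ coming from the commutators. To control these cubic terms one needs Peng--Terng/Huisken-type pointwise bounds expressing each contraction in terms of $S$ and $|\nabla A|^2$, together with the refined Kato inequality $|\nabla A|^2 \geq \tfrac{3}{n+2}|\nabla S|^2$ valid for minimal hypersurfaces. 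Combining these ingredients with the first identity yields a second integral inequality relating $\int_M |\nabla^2 A|^2$, $\int_M |\nabla A|^2$ and $\int_M (S-n)|\nabla A|^2$.

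The hard part will be balancing these two inequalities with the right multiplier so that the coefficient of the leading term closes up precisely at $S=\tfrac{24}{23}n$. Concretely, I would multiply Simons' identity by an affine function of $S-n$, add a carefully chosen multiple of the $\Delta|\nabla A|^2$ inequality, and aim to show that the resulting integrand is a nonnegative combination of $(S-n)$ and a manifestly nonnegative quadratic form in $\nabla A$ and $\nabla^2 A$. The hypothesis $n\geq 6$ should enter here through two places: the Kato constant $\tfrac{3}{n+2}$, and the sign of the coefficient $2n-\tfrac{24}{23}n-3$ that governs positivity of the gradient term; both become favourable precisely for $n\geq 6$, which is why the earlier dimension-by-dimension arguments of Peng--Terng, Wei--Xu, and Zhang only produced much smaller pinching constants and had to treat small $n$ separately.

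The main obstacle is therefore the tightness of the constant: the algebraic inequalities used to absorb $Q$ must be sharp enough that the combined integrand still has the correct sign throughout the pinched range. Once this is achieved, one concludes $|\nabla A|\equiv 0$, hence $A$ is parallel, and Lawson's classification of isoparametric hypersurfaces forces $M$ to be the Clifford torus $\mathbb{S}^{k}(\sqrt{k/n})\times\mathbb{S}^{n-k}(\sqrt{(n-k)/n})$ with $S\equiv n$.
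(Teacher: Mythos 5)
Your high-level framework is right — play the Simons identity for $\Delta S$ off against the one for $\Delta|\nabla h|^2$ — but the proposal never gets to the step that actually produces $\tfrac{24}{23}$, and as written the plan would stall at a much weaker constant. The bottleneck is precisely the cubic contraction you lump into $Q$, which after diagonalizing $h$ is controlled by $A-2B$ with $A=\sum h_{ijk}^2\lambda_i^2$ and $B=\sum h_{ijk}^2\lambda_i\lambda_j$. The ``Peng--Terng/Huisken-type pointwise bounds'' you invoke give only $3(A-2B)\le\sigma S|\nabla h|^2$ with $\sigma=\tfrac{1+\sqrt{17}}{2}\approx 2.56$; plugging that in and optimizing multipliers caps the pinching gap at order $1/n$, which is exactly why Peng--Terng, Wei--Xu, and Zhang got small constants. (Your worry about the sign of ``$2n-\tfrac{24}{23}n-3$'' also points at the wrong place: the coefficient in the $\Delta|\nabla h|^2$ identity is $2n+3-S$, which is harmlessly positive throughout the pinched range; the restriction $n\ge 6$ arises from balancing the sharpened constants below, not from a sign flip there.)

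What the argument actually needs (both in Ding--Xin and in the Main Theorem proof here at $H=0$) is a genuinely finer pointwise bound,
\begin{equation*}
3(A-2B)\ \le\ \bigl(S+4+C_3(n)\,G^{1/3}\bigr)|\nabla h|^2,\qquad
G=\sum_{i,j}(\lambda_i-\lambda_j)^2(1+\lambda_i\lambda_j)^2 ,
\end{equation*}
proved by a case analysis on the signs of the $\lambda_i\lambda_j$ together with an auxiliary one-variable optimization. The quantity $G$ is then absorbed on the other side via the symmetrization
\begin{equation*}
u_{ijkl}=\tfrac14\bigl(h_{ijkl}+h_{lijk}+h_{klij}+h_{jkli}\bigr),\qquad
|\nabla^2 h|^2\ \ge\ \tfrac34\,G+\tfrac{3F(\Phi)^2}{2(n+4)\Phi},
\end{equation*}
where the second summand comes from the Lagrange-multiplier estimate of Proposition~\ref{prop2} applied to the constraint $\sum(\mu_i+\mu_j)u_{iijj}=-F(\Phi)$. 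Young's inequality then converts the $G^{1/3}|\nabla h|^2$ piece into $\epsilon G+C_\epsilon|\nabla h|^3$, the cubic term $\int|\nabla h|^3$ is handled by a further integration by parts against $\Delta\Phi$, and the two free parameters ($\theta$ splitting $G$ and $\epsilon$ in Young) are tuned to close exactly at $\delta=\tfrac{n}{23}$. None of these three moves — the $G^{1/3}$ refinement of the $A-2B$ bound, the $u_{ijkl}$-symmetrization lower bound for $|\nabla^2 h|^2$, and the absorption of $\int|\nabla h|^3$ — appears in your sketch, and without them ``multiply by an affine function of $S-n$ and add'' will not produce a nonnegative integrand at $S=\tfrac{24}{23}n$. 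Also note the claimed Kato inequality $|\nabla A|^2\ge\tfrac{3}{n+2}|\nabla S|^2$ is dimensionally inconsistent ($|\nabla S|^2\sim S\,|\nabla|A||^2$); what the paper uses instead is $|\nabla^2 h|^2\ge\tfrac34\sum_{i\ne j}t_{ij}^2$ with $t_{ij}=h_{ijij}-h_{jiji}$.
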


\par The rigidity problem for hypersurfaces of constant
mean curvature is much more complicated than the minimal
hypersurface case (see \cite{AL}, \cite{B}, \cite{C2}, \cite{X0},
\cite{X1}, \cite{XT}, \cite{XX}). For example, the famous Lawson
conjecture \cite{L2}, verified by Brendle \cite{B}, states that the
Clifford torus is the only compact embedded minimal surface with
genus $1$ in $\mathbb{S}^3$. Recently, Andrews and Li \cite{AL}
proved a beautiful classification theorem for constant mean
curvature tori in $\mathbb{S}^3$, which implies that constant mean
curvature tori in $\mathbb{S}^3$ include the Clifford torus as well
as  many other CMC surfaces.

\par Let $M$ be an $n$-dimensional compact submanifold with
parallel mean curvature in the unit sphere $\mathbb{S}^{n+p}$. We
put
$$\alpha(n,H)=n+ \frac{n^{3}}{2(n-1)}H^{2} -
\frac{n(n-2)}{2(n-1)}\sqrt{n^{2}H^{4}+4(n-1)H^{2}},$$
$$\beta(n,H)=n+\frac{n^3}{2(n-1)}H^2+\frac{n(n-2)}{2(n-1)}\sqrt{n^2H^4+4(n-1)H^2},$$
$$\lambda=\frac{n|H|+\sqrt{n^2H^2+4(n-1)}}{2(n-1)},\,\,\,\mu=\frac{n|H|+\sqrt{n^2H^2+4(n-1)}}{2}.$$

\par In \cite{X0,X1}, the first author proved the generalized
Simons-Lawson-Chern-do Carmo-Kobayashi theorem for submanifolds with
parallel mean curvature in a sphere.
\begin{thmC}
Let $M$ be an $n$-dimensional oriented compact submanifold with
parallel mean curvature in an $(n+p)$-dimensional unit sphere
$\mathbb{S}^{n+p}$. If $S\leq C(n,p,H),$ then $M$ is either a
totally umbilic sphere, a Clifford torus in an $(n+1)$-sphere, or
the Veronese surface in $\mathbb{S}^{4}(\frac{1} {\sqrt{1+H^{2}}})$.
In particular, if $M$ is a compact hypersurface with constant mean
curvature in $\mathbb{S}^{n+1}$, and if $S\leq \alpha(n,H),$  then
$M$ is either a totally umbilic sphere, or a Clifford torus. Here
the constant $C(n,p,H)$ is defined by
$$C(n,p,H)=\left\{\begin{array}{llll} \alpha(n,H),&\mbox{\ for\ } p=1, \mbox{\ or\ } p=2 \\ & \mbox{\ and\ }
 H\neq0,\\
\min\Big\{\alpha(n,H),\frac{1}{3}(2n+5nH^2)\Big\},&\mbox{\
otherwise.\ }
\end{array} \right.$$
\end{thmC}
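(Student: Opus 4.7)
The plan is to establish Theorem C through a Simons-type pinching argument, combining a second-order identity for $S$ with sharp algebraic estimates on the shape operators, and then invoking the classification of submanifolds with parallel second fundamental form in a sphere. First, I would write the Simons identity for a submanifold with parallel mean curvature vector in $\mathbb{S}^{n+p}$:
\[
\tfrac{1}{2}\Delta S = |\nabla h|^2 + nS - \sum_{\alpha,\beta}[\operatorname{tr}(A_\alpha A_\beta)]^2 - \sum_{\alpha,\beta}|A_\alpha A_\beta - A_\beta A_\alpha|^2 + n\sum_\alpha H^\alpha \operatorname{tr}(A_\alpha^3),
\]
where the $A_\alpha$ are the shape operators in a local orthonormal normal frame aligned with the mean curvature vector. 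The parallel mean curvature assumption is used to remove gradient terms in $H$ and to guarantee that only one normal direction (say $\xi_{n+1}$) carries a nonzero trace.

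For the hypersurface case $p=1$, the plan is to decompose $h = \phi + Hg$ with $\phi$ traceless and apply Okumura's inequality $|\operatorname{tr}(\phi^3)|\le \frac{n-2}{\sqrt{n(n-1)}}|\phi|^3$. This turns the identity into
\[
\tfrac{1}{2}\Delta S \ge |\nabla h|^2 - |\phi|^2\, Q_H(|\phi|),
\]
where $Q_H$ is a quadratic whose roots (after reverting $|\phi|^2 = S - nH^2$) are precisely $\alpha(n,H)$ and $\beta(n,H)$. The hypothesis $S\le \alpha(n,H)$ forces $Q_H(|\phi|)\le 0$, so $\Delta S\ge 0$; integrating on compact $M$ gives $\nabla h\equiv 0$ and pointwise equality in $Q_H=0$. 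Thus $M$ has parallel second fundamental form, and either $\phi\equiv 0$ (totally umbilical sphere) or $|\phi|$ attains the critical positive root, which, via the classical classification of isoparametric hypersurfaces with two principal curvatures, identifies $M$ as a Clifford torus.

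For codimension $p\ge 2$ I would run the parallel argument using the Li--Li-type bound
\[
\sum_{\alpha,\beta}\bigl([\operatorname{tr}(A_\alpha A_\beta)]^2 + |A_\alpha A_\beta - A_\beta A_\alpha|^2\bigr) \le \tfrac{3}{2}S^2,
\]
combined with the parallel-mean-curvature assumption, to obtain $\tfrac{1}{2}\Delta S \ge |\nabla h|^2 + S\bigl(\tfrac{2n+5nH^2}{3}-S\bigr)$, which yields the alternative bound $\tfrac{1}{3}(2n+5nH^2)$. When $p=2$ and $H\ne 0$, a further refinement using the normal-bundle structure recovers the sharper $\alpha(n,H)$, which is why $C(n,p,H)$ is defined as that minimum. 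Taking the pinching constant $C(n,p,H)$ thus forces $\nabla h\equiv 0$ in all cases, and the equality analysis reduces to the classification of compact submanifolds with parallel second fundamental form in $\mathbb{S}^{n+p}$, which singles out totally umbilical spheres, Clifford tori inside an $(n+1)$-sphere, and the Veronese surface in $\mathbb{S}^4(1/\sqrt{1+H^2})$.

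The main obstacle, I expect, is twofold. First, obtaining the sharp algebraic inequality for the cubic term $\sum_\alpha H^\alpha \operatorname{tr}(A_\alpha^3)$ in higher codimension: one must simultaneously diagonalize $A_{n+1}$ and carefully estimate the coupling to the other $A_\alpha$, which is exactly what breaks when $p\ge 2$ and $H=0$ and forces the minimum with the Li--Li bound. Second, the rigidity step requires the classification of parallel-$h$ compact submanifolds in spheres (Lawson, Chern--do Carmo--Kobayashi, Yau) together with matching the value of $|h|^2$ forced by the equality case of Okumura or Li--Li, to pin down the three explicit models in the statement rather than only concluding symmetry.
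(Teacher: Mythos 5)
The paper does not prove Theorem~C: it is quoted verbatim as a prior result of the first author, with the proof residing in the cited references \cite{X0,X1}. So there is no in-paper proof against which to compare, and your task was really to reconstruct the classical Simons--Okumura--Li-Li argument. Your outline is the right one and matches the standard route: a Simons-type identity for $\tfrac{1}{2}\Delta S$ under parallel mean curvature, the split $h=\phi+Hg$ and Okumura's cubic estimate in codimension one, the Li--Li bound $\sum_{\alpha,\beta}\bigl([\operatorname{tr}(A_\alpha A_\beta)]^2+N([A_\alpha,A_\beta])\bigr)\le\frac{3}{2}S^2$ in higher codimension, integration over compact $M$ to force $\nabla h\equiv 0$, and then the classification of compact submanifolds with parallel second fundamental form to land on the three model cases. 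Your observation that the two roots of the quadratic $Q_H$ in $|\phi|$, after squaring and shifting by $nH^2$, give exactly $\alpha(n,H)$ and $\beta(n,H)$ is correct (the negative root of $Q_H$ squares to $\beta(n,H)-nH^2$), and identifies the right mechanism behind the pinching constant.

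Two points deserve attention. First, your Simons identity is missing a term: for $p=1$ it should read $\tfrac{1}{2}\Delta S = |\nabla h|^2 + nS - S^2 - n^2H^2 + nHf_3$ (cf.\ equation~(2.8) of the paper), i.e.\ there is an extra $-n^2H^2$ coming from the Ricci/Gauss contribution; your version drops it, which would shift the critical value of $S$ away from $\alpha(n,H)$. In general codimension the correct identity carries terms of the form $-\sum_\alpha(\operatorname{tr}A_\alpha)^2$ that your formula omits. Second, the case $p=2$, $H\neq0$ is dispatched too quickly with ``a further refinement using the normal-bundle structure.'' What is actually needed is a reduction-of-codimension step: under parallel mean curvature with $H\neq0$ the direction $\xi=\vec H/|\vec H|$ is a parallel normal section, so one studies $A_\xi$ separately and, in the pinched range, shows either $M$ is pseudo-umbilical (hence lies as a minimal submanifold in a small hypersphere) or the first normal space is parallel of rank one (so $M\subset\mathbb{S}^{n+1}$ totally geodesically), after which the codimension-one analysis applies. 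Without making this reduction explicit you cannot justify replacing $\frac{1}{3}(2n+5nH^2)$ by the sharper $\alpha(n,H)$ when $p=2$ and $H\neq0$.
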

\par The second pinching theorem for $n(\leq 8)$-dimensional
hypersurfaces with small constant mean curvature was proved for
$n\le 7$ by Cheng-He-Li \cite{CHL} and Xu-Zhao \cite{XZ}, and for
$n=8$ by Chen-Li \cite{CL} and Xu \cite{XU1}, respectively. In
\cite{XX}, the authors obtained the following second pinching
theorem for hypersurfaces with small constant mean curvature in
spheres.

\begin{thmD}\label{thm3}
Let $M$ be an $n$-dimensional compact hypersurface with constant
mean curvature in the unit sphere $\mathbb{S}^{n+1}$. There exist
two positive constants $\gamma_0(n)$ and $\delta_0(n)$ depending
only on $n$ such that if $|H|\leq\gamma_0(n)$, and $\beta(n,H)\leq
S\leq\beta(n,H)+\delta_0(n)$, then $S\equiv\beta(n,H)$ and $M$ is
one of the following cases: (i)
$\mathbb{S}^{k}(\sqrt{\frac{k}{n}})\times
\mathbb{S}^{n-k}(\sqrt{\frac{n-k}{n}})$, $\,1\le k\le n-1$;  (ii)
$\mathbb{S}^{1}(\frac{1}{\sqrt{1+\mu^2}})\times
\mathbb{S}^{n-1}(\frac{\mu}{\sqrt{1+\mu^2}})$.
\end{thmD}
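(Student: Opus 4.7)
The plan is to adapt the integral-inequality framework of Ding--Xin \cite{DX}, which settled the minimal case $H=0$ with exactly the same numerical gap $\tfrac{n}{23}$ (since $\tfrac{24}{23}n=n+\tfrac{n}{23}$), to the constant mean curvature setting with $|H|$ small, by combining it with the $H$-perturbation techniques developed in \cite{XX} for Theorem D. Because Ding--Xin's numerical argument saturates at $\tfrac{n}{23}$ in the minimal limit, every correction introduced by $H\neq 0$ must be of order $|H|$ times a quantity already controlled by the main inequality, so as to be absorbable by choosing $\gamma(n)$ sufficiently small.

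Writing $f:=S-\beta(n,H)\ge 0$, I would start from the Simons-type identity
\begin{equation*}
\tfrac{1}{2}\Delta S=|\nabla A|^2+S(n-S)+nH\,\mathrm{tr}(A^3)-n^2H^2,
\end{equation*}
together with its second-order analogue expressing $\tfrac{1}{2}\Delta|\nabla A|^2$ in terms of $|\nabla^2 A|^2$ and polynomial invariants of $A$. The Alencar--do Carmo refinement of Okumura's inequality bounds the cubic term $nH\,\mathrm{tr}(A^3)$ on the pinching slab and combines with $S(n-S)-n^2H^2$ to produce a sign-definite expression whose critical values are precisely $\alpha(n,H)$ and $\beta(n,H)$; this is the mechanism by which the pinching becomes quantitative. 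A sharp Kato-type estimate $|\nabla^2 A|^2\ge c(n)\bigl|\nabla |\nabla A|\bigr|^2$, in the spirit of Ding--Xin, furnishes the pointwise ingredient needed to iterate at the integral level.

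Next I would multiply these identities by appropriate powers $f^{q}$ and $|\nabla A|^{2q}$, integrate over $M$, and apply integration by parts together with the Gauss equation. The resulting schematic inequality has the form
\begin{equation*}
\int_M f^{q}|\nabla A|^2\le C(n)\Bigl(\sup_M f\Bigr)\int_M f^{q-1}|\nabla A|^2+|H|\cdot\mathcal{R}(q,n,H),
\end{equation*}
where $\mathcal{R}$ collects the cross terms that vanish identically when $H=0$. Under $\sup_M f\le\tfrac{n}{23}$, the leading constant $C(n)\cdot\tfrac{n}{23}$ matches the Ding--Xin balance exactly, so the main obstacle is to bound $\mathcal{R}$ by the good integrals with constants independent of $H$, whereupon $|H|\cdot\mathcal{R}$ is absorbed once $|H|\le\gamma(n)$. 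This uniform control, carried out across the entire pinching slab and for every exponent $q$ arising in the iteration, will be the most delicate part of the argument, since the minimal limit leaves no numerical slack.

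Once the iteration forces $|\nabla A|\equiv 0$, the hypersurface is isoparametric and attains equality in the Alencar--do Carmo inequality. The equality case permits at most two distinct principal curvatures, with any multiplicity partition $(k,n-k)$ when $H=0$ (yielding case (i)) and only the partition $(1,n-1)$ when $H\ne 0$ (yielding case (ii), with the two principal curvatures determined by $H$ and $S=\beta(n,H)$ to equal $-1/\mu$ and $\mu$). E.~Cartan's classification of two-curvature isoparametric hypersurfaces in $\mathbb{S}^{n+1}$ then identifies $M$ with the stated product of spheres.
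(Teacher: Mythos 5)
Your overall roadmap — start from the Simons identity for $\tfrac12\Delta S$ and the second-order Peng--Terng formula for $\tfrac12\Delta|\nabla h|^2$, integrate, exploit Okumura's lemma to control $\mathrm{tr}\,A^3$, and absorb the $O(|H|)$ corrections by shrinking $\gamma_0(n)$, then classify the isoparametric limit — is the right skeleton, and your endgame (equality in the Okumura inequality forces two principal curvatures, with partition $(k,n-k)$ for $H=0$ and $(1,n-1)$ for $H\ne 0$) matches the paper's conclusion. However, the analytic core of your proposal misrepresents what the Ding--Xin argument actually does, and as written it omits the ingredient that makes the whole estimate close.

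First, Ding--Xin's method contains neither a Kato-type lower bound of the form $|\nabla^2 A|^2\ge c(n)\bigl|\nabla|\nabla A|\bigr|^2$ nor a Moser-type iteration with test functions $f^q$ or $|\nabla A|^{2q}$. What they (and this paper, in Lemma~\ref{lem2}) prove is a \emph{pointwise algebraic} estimate on the quantity $A-2B=\sum_{i,j,k}h_{ijk}^2\lambda_i(\lambda_i-2\lambda_j)$ appearing in the second Simons-type identity (2.9), of the schematic form $3(A-2B)\le\bigl[S+4+C(n)\,G^{1/3}+q(n,H)\bigr]|\nabla h|^2$, where $G=\sum_{i,j}(\lambda_i-\lambda_j)^2(1+\lambda_i\lambda_j)^2$. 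Your proposal never mentions $A-2B$, yet it is precisely the term that threatens to destroy the good sign of $\int_M\tfrac12\Delta|\nabla h|^2=0$, and bounding it sharply is the whole point of the parameter-method lemmas in Section~3. Without this estimate you have no way to compare the coefficient of $\int|\nabla h|^2$ against $-2n-3+S$, and the integral inequality does not close.

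Second, the lower bound on $|\nabla^2 h|^2$ that the paper uses is also algebraic, not Kato-type: inequalities (2.11) and (4.13)--(4.15) bound $|\nabla^2 h|^2$ below by $\tfrac34 G+\tfrac{3F(\Phi)^2}{2(n+4)\Phi}$, obtained via the symmetrized fourth-derivative tensor $u_{ijkl}$ and the Lagrange-multiplier optimization of Proposition~\ref{prop2}. This lower bound, combined with the integral identity (3.6) relating $\int G$ to $\int(A-2B)$, is what allows the $G^{1/3}|\nabla h|^2$ term from the $A-2B$ estimate to be absorbed by Young's inequality into $\int G$ plus $\int|\nabla h|^3$ (the latter controlled by (4.18)). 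Your schematic iteration inequality $\int f^q|\nabla A|^2\le C(n)(\sup f)\int f^{q-1}|\nabla A|^2+|H|\mathcal R$ has no such mechanism: multiplying the Bochner identities by powers of $f$ introduces derivative cross-terms that are not sign-controlled, and even if one could establish it, bounding $\int f^q|\nabla A|^2$ by $C^q\int|\nabla A|^2$ for all $q$ does not yield $\nabla A\equiv 0$ without further structure. The paper gets vanishing directly from the single integral inequality (4.29) once the coefficient of $\int|\nabla\phi|^2$ is shown to be strictly negative.

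In short: the high-level strategy and the rigidity conclusion are sound, but the proposal is missing the pointwise $A-2B$ estimate (the paper's stated ``key ingredient''), replaces the actual $|\nabla^2 h|^2$ lower bound with a nonexistent Kato inequality, and substitutes for the real two-parameter integral argument an iteration scheme that is not well-posed. These are not cosmetic differences — they are the steps where the proof would fail.
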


Note that the second pinching constant $\delta_0(n)$ in Theorem D is
equal to $O(\frac{1}{n^2})$. The purpose of this paper is to give a
new characterization of the Clifford torus. We prove the following
theorem.

\begin{mthm}\label{mthm}
Let $M$ be an $n$-dimensional compact hypersurface with constant
mean curvature in the unit sphere $\mathbb{S}^{n+1}$. There exists a
positive constant $\gamma(n)$ depending only on $n$ such that if
$|H|\leq\gamma(n)$, and $\beta(n,H)\leq
S\leq\beta(n,H)+\frac{n}{23}$, then $S\equiv\beta(n,H)$ and $M$ is
one of the following cases: (i)
$\mathbb{S}^{k}(\sqrt{\frac{k}{n}})\times
\mathbb{S}^{n-k}(\sqrt{\frac{n-k}{n}})$, $\,1\le k\le n-1$;  (ii)
$\mathbb{S}^{1}(\frac{1}{\sqrt{1+\mu^2}})\times
\mathbb{S}^{n-1}(\frac{\mu}{\sqrt{1+\mu^2}})$.
\end{mthm}

\par Our main theorem generalizes Theorem B and improves Theorem
D as well. The key ingredient of our proof is to estimate $A-2B$. By
using the parameter method, we obtain an upper bound for $A-2B$ in
the form of $\frac{1}{3}[S+4+C(n)G^{1/3}+q(n,H)]|\nabla h|^2$. Then
we derive a new integral estimate with two parameters. By choosing
suitable values of the parameters, we show that $\nabla h=0$ which
implies that $M$ is a Clifford torus.

\section{Preliminaries}\label{sec2}
\par Let $M^n$ be an $n(\geq2)$-dimensional compact
hypersurface with constant mean curvature in the unit sphere
$\mathbb{S}^{n+1}$. We shall make use of the following convention on
the range of indices:
                     $$1\leq i, j, k, \ldots, \leq n.$$
We choose a local orthonormal frame $\{e_1, e_2, \ldots, e_{n+1}\}$
near a fixed point $x\in M$ over $\mathbb{S}^{n+1}$ such that
$\{e_1, e_2, \ldots, e_n\}$ are tangent to $M$.
\par Let $\{\omega_1, \omega_2,
\ldots, \omega_{n+1}\}$ be the dual frame fields of $\{e_1, e_2,
\ldots, e_{n+1}\}$. Denote by $R$, $H$, $h$ and $S$ the scalar
curvature, the mean curvature, the second fundamental form and the
squared length of the second fundamental form of $M$, respectively.
Then we have
\begin{equation}\label{2.2}
h=\sum\limits_{i,j}h_{ij}\omega_{i}\otimes\omega_{j},\,\,
h_{ij}=h_{ji},
\end{equation}
\begin{equation}\label{2.3}
S=\sum\limits_{i,j}h_{ij}^{2}, \,\,
H=\frac{1}{n}\sum\limits_{i}h_{ii},
\end{equation}
\begin{equation}\label{2.4}
R=n(n-1)+n^2H^{2}-S.
\end{equation}
Choose $e_{n+1}$ such that $H\ge0$. Denote by $h_{ijk}$, $h_{ijkl}$
and $h_{ijklm}$ the first, second and third covariant derivatives of
the second fundamental tensor $h_{ij}$, respectively. Then we have
\begin{equation}\label{2.5}
\nabla
h=\sum\limits_{i,j,k}h_{ijk}\omega_{i}\otimes\omega_{j}\otimes\omega_{k},\,\,
h_{ijk}=h_{ikj},
\end{equation}
\begin{equation}\label{2.6}
h_{ijkl}=h_{ijlk}+\sum\limits_{m}h_{mj}R_{mikl}+\sum\limits_{m}h_{im}R_{mjkl},
\end{equation}
\begin{equation}\label{2.7}
h_{ijklm}=h_{ijkml}+\sum\limits_{r}h_{rjk}R_{rilm}+\sum\limits_{r}h_{irk}R_{rjlm}+\sum\limits_{r}h_{ijr}R_{rklm}.
\end{equation}
Take a suitable orthonormal frame $\{e_1, e_2, \ldots, e_n\}$  at
$x$ such that $h_{ij}=\lambda_{i}\delta_{ij}$ for all $i$, $j$. Then
we have
\begin{equation}\label{2.8}
\frac{1}{2}\Delta S=S(n-S)-n^2H^2+nHf_3+|\nabla h|^2,
\end{equation}
\begin{eqnarray}\label{2.9}
\frac{1}{2}\Delta|\nabla h|^2&=&\nonumber (2n+3-S)|\nabla
h|^2-\frac{3}{2}|\nabla S|^2+|\nabla^2
h|^2\\
& &+3(2B-A)+3nHC,
\end{eqnarray}
where $$f_k=\sum\limits_{i}\lambda_{i}^k,\,\,
A=\sum\limits_{i,j,k}h_{ijk}^2\lambda_{i}^2,\,\,
B=\sum\limits_{i,j,k}h_{ijk}^2\lambda_{i}\lambda_{j},\,\,
C=\sum\limits_{i,j,k}h_{ijk}^2\lambda_{i}.$$\\
Following \cite{XX}, we have
\begin{equation}\label{2.11}
|\nabla^2 h|^2\geq\frac{3}{4}\sum\limits_{i\neq
j}t_{ij}^2=\frac{3}{4}\sum\limits_{i,j}t_{ij}^2,
\end{equation}
\begin{equation}\label{2.12}
3(A-2B)\leq \sigma S|\nabla h|^2,
\end{equation}
where $t_{ij}=h_{ijij}-h_{jiji}$ and $\sigma=\frac{\sqrt{17}+1}{2}$.
Moreover, we have
\begin{equation}\label{2.14}
\int_{M}(A-2B)dM=\int_{M}(nHf_3-S^2-f_{3}^2+Sf_4-\frac{|\nabla
S|^2}{4})dM.
\end{equation}
\par Denote by $\phi:=\sum\limits_{i,j}\phi_{ij}\omega_{i}\otimes\omega_{j}$ the trace free second fundamental form of $M$. By diagonalizing $h_{ij}$, we have $\phi_{ij}=\mu_i\delta_{ij}$,
where $\mu_i=\lambda_i-H$. Putting $\Phi=|\phi|^2$ and
$\bar{f}_k=\sum\limits_{i}\mu_i^k$, we get $\Phi=S-nH^2$ and
$f_3=\bar{f}_3+3H\Phi+nH^3$. From (\ref{2.8}), we obtain
\begin{equation}\label{3.1}
\frac{1}{2}\Delta\Phi=-F(\Phi)+|\nabla\phi|^2,
\end{equation}
where $F(\Phi)=\Phi^2-n\Phi-nH^2\Phi-nH\bar{f}_3$. Therefore, we
have
\begin{equation}\label{3.2}
|\nabla\Phi|^2=\frac{1}{2}\Delta(\Phi)^2-\Phi\Delta\Phi=\frac{1}{2}\Delta(\Phi)^2+2\Phi
F(\Phi)-2\Phi|\nabla\phi|^2,
\end{equation}
\begin{equation}\label{3.3}
\int_MF(\Phi)dM=\int_M|\nabla\phi|^2dM.
\end{equation}
\par Put $\beta_0(n,H)=\beta(n,H)-nH^2.$
When $\Phi\geq\beta_0(n,H)$, it's seen from Proposition \ref{prop1}
(i) that
\begin{equation}\label{3.4}
F(\Phi)\geq0.
\end{equation}
Moreover, if $F(\Phi)=0$, then $\Phi=\beta_0(n,H)$.
\par Set
$G=\sum\limits_{i,j}(\lambda_i-\lambda_j)^2(1+\lambda_i\lambda_j)^2$.
Then we have
\begin{equation}\label{3.5}
G=2[Sf_4-f_3^2-S^2-S(S-n)+2nHf_3-n^2H^2].
\end{equation}
This together with (\ref{2.8}) and (\ref{2.14}) implies
\begin{equation}\label{3.6}
\frac{1}{2}\int_MGdM=\int_M[(A-2B)-|\nabla h|^2+\frac{1}{4}|\nabla
S|^2]dM.
\end{equation}

\par The following propositions will be used in the proof
of Main Theorem.

\begin{proposition}\label{prop1}
{\rm (See \cite{X1}, \cite{Z1})} (i) Let $a_1$, $a_2$, ..., $a_n$ be
real numbers satisfying $\sum\limits_{i}a_i=0$ and
$\sum\limits_{i}a_i^2=\mathscr{A}$.
Then$$|\sum\limits_{i}a_i^3|\leq\frac{n-2}{\sqrt{n(n-1)}}\mathscr{A}^{\frac{3}{2}},$$
and the equality holds if and only if at least $n-1$ numbers of
 $a_i$'s are same with each other.\\
(ii) Assume $f_n(t)=17[t-2-\eta(n)][3(n-2)t+(n+2)\eta(n)+10-4n]$ and
$g_n(t)=[8+16\eta(n)](4t-2-3\sqrt{-2t^2+2t+8})$, for $2\leq t\leq
\frac{1+\sqrt{17}}{2}$ and $4\leq n\leq5$. Here $\eta(4)=0.16$ and
$\eta(5)=0.23$. Then
$$h_n(t)=f_n(t)-g_n(t)\leq0.$$
\end{proposition}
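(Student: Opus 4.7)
The statement has two parts of quite different character, so I would treat them separately. Part (i) is a classical third-power-sum inequality on traceless vectors, while part (ii) is a closed-form numerical inequality that need only be checked for the two integer values $n=4,5$ with the specific constants $\eta(4)=0.16$ and $\eta(5)=0.23$.

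For part (i), assume $\mathscr{A}>0$ (the case $\mathscr{A}=0$ is trivial) and normalize by $b_i=a_i/\sqrt{\mathscr{A}}$, so that $\sum b_i=0$ and $\sum b_i^{2}=1$. I would extremize $\Phi(b)=\sum b_i^{3}$ on this compact constraint set. The Lagrange equations $3b_i^{2}=\lambda+2\mu b_i$ force every $b_i$ to be a root of one fixed quadratic, so at any critical point the $b_i$ take at most two distinct values. If $k$ of them equal some $a$ and $n-k$ equal some $c\neq a$, the two linear constraints determine $(a,c)$ up to a common sign, and substitution gives
\[
\sum b_i^{3}=\pm\frac{n-2k}{\sqrt{n\,k(n-k)}}.
\]
A short calculation shows that $(n-2k)^{2}/[k(n-k)]$ attains its maximum over $k\in\{1,\ldots,n-1\}$ at $k=1$ and $k=n-1$ with value $(n-2)^{2}/(n-1)$, yielding the stated bound $(n-2)/\sqrt{n(n-1)}$ after rescaling back to the $a_i$. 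Reading off when the maximum is attained produces the ``at least $n-1$ equal'' equality case.

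For part (ii), I would eliminate the square root. The claim $h_n(t)\le 0$ is equivalent to
\[
3\bigl[8+16\eta(n)\bigr]\sqrt{-2t^{2}+2t+8}\le\bigl[8+16\eta(n)\bigr](4t-2)-f_n(t).
\]
A direct endpoint check shows the right-hand side is strictly positive at $t=2$ and at $t=(1+\sqrt{17})/2$, and a monotonicity argument shows it stays nonnegative throughout $[2,(1+\sqrt{17})/2]$. One can then square to obtain the equivalent polynomial inequality
\[
9\bigl[8+16\eta(n)\bigr]^{2}(-2t^{2}+2t+8)\le\Bigl(\bigl[8+16\eta(n)\bigr](4t-2)-f_n(t)\Bigr)^{2}.
\]
Because $f_n$ is quadratic in $t$, this is a degree-four polynomial inequality with fully explicit numerical coefficients---one instance for $n=4$ and one for $n=5$. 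Each instance is verified on $[2,(1+\sqrt{17})/2]$ by evaluating the quartic at the endpoints and at the real roots of its derivative inside the interval, a routine finite computation.

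The main obstacle is not the individual verifications but the design choice that underlies (ii): the specific values $\eta(4)=0.16$ and $\eta(5)=0.23$ are tuned so that $h_n\le 0$ holds with precisely the amount of slack needed to feed into the $A-2B$ estimate of the Main Theorem. Locating such $\eta(n)$---rather than verifying the inequality for a given $\eta(n)$---is the nontrivial piece of the construction; once the values are fixed, the proof collapses to the bookkeeping outlined above.
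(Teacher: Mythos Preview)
The paper does not supply its own proof of Proposition~1; it is stated with a citation to \cite{X1} and \cite{Z1} and then used as a black box. So there is no in-paper argument to compare your proposal against, and I can only assess your sketch on its own merits.

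Your treatment of part~(i) is the standard Lagrange-multiplier argument and is correct: the critical-point equations force at most two distinct values among the $b_i$, and the explicit evaluation $|\sum b_i^3|=|n-2k|/\sqrt{nk(n-k)}$ is maximized at $k=1$ or $k=n-1$, giving the bound and the equality characterization exactly as you wrote.

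Your treatment of part~(ii) is also sound, with one small correction. You write that ``a monotonicity argument'' shows the right-hand side
\[
R(t)=\bigl[8+16\eta(n)\bigr](4t-2)-f_n(t)
\]
stays nonnegative on $[2,(1+\sqrt{17})/2]$. The relevant property is not monotonicity but \emph{concavity}: since $f_n$ is quadratic with leading coefficient $51(n-2)>0$, the function $R$ is a concave quadratic in $t$, hence its minimum on a closed interval is attained at an endpoint. Your endpoint checks (which do go through numerically for both $n=4$ and $n=5$) therefore suffice to guarantee $R\ge 0$ on the whole interval, after which squaring is legitimate and the problem reduces to the quartic verification you describe. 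Your closing remark that the real content lies in the tuning of $\eta(4)$ and $\eta(5)$ rather than in the post-hoc verification is accurate and matches how the proposition is used downstream in Lemma~\ref{lem4}.
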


\begin{proposition}\label{prop2}
Let $a_{ij}$, $b_i$ ($i,j=1,\dots,n$) be real numbers satisfying
$$\sum\limits_{i}b_i=0,\,\,\, \sum\limits_{i}b_{i}^{2}=\mathscr{B}(>0),\,\,\, \sum\limits_{i,j}(b_{i}+b_{j})a_{ij}=\mathscr{C}. $$  Then we have
$$\sum\limits_{i}a_{ii}^{2}+3\sum\limits_{i\neq
j}a_{ij}^{2}\geq\frac{3\mathscr{C}^2}{2(n+4)\mathscr{B}}.$$
\end{proposition}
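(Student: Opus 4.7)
The plan is to view the desired inequality as a constrained minimization of the positive definite quadratic form $Q(a):=\sum_i a_{ii}^2+3\sum_{i\neq j}a_{ij}^2$ subject to the single linear constraint $\sum_{i,j}(b_i+b_j)a_{ij}=\mathscr{C}$ on the variables $a_{ij}$. For any problem of this type, the sharp lower bound on $Q$ is furnished by the weighted Cauchy--Schwarz inequality, with weights matching those in $Q$.

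Concretely, I would split the constraint according to the split of $Q$ into diagonal and off-diagonal parts, using that for $i=j$ the constraint coefficient is $b_i+b_i=2b_i$ while for $i\neq j$ it is $b_i+b_j$:
$$\mathscr{C}=\sum_i(2b_i)\,a_{ii}+\sum_{i\neq j}(b_i+b_j)\,a_{ij}.$$
Applying Cauchy--Schwarz with weight $1$ on the diagonal terms and weight $3$ on the off-diagonal terms then yields
$$\mathscr{C}^2\leq\Bigl(\sum_i a_{ii}^2+3\sum_{i\neq j}a_{ij}^2\Bigr)\Bigl(\sum_i(2b_i)^2+\tfrac{1}{3}\sum_{i\neq j}(b_i+b_j)^2\Bigr),$$
so the whole task reduces to evaluating the second factor explicitly in terms of $\mathscr{B}$.

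For this evaluation, I would expand $\sum_{i\neq j}(b_i+b_j)^2=2(n-1)\sum_i b_i^2+2\sum_{i\neq j}b_ib_j$, and then invoke the hypothesis $\sum_i b_i=0$ to rewrite $\sum_{i\neq j}b_ib_j=(\sum_i b_i)^2-\sum_i b_i^2=-\mathscr{B}$. This gives $\sum_{i\neq j}(b_i+b_j)^2=2(n-2)\mathscr{B}$, and combining with $\sum_i(2b_i)^2=4\mathscr{B}$ the second factor collapses to $4\mathscr{B}+\tfrac{2(n-2)\mathscr{B}}{3}=\tfrac{2(n+4)\mathscr{B}}{3}$. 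Rearranging the Cauchy--Schwarz bound then produces exactly $Q\geq 3\mathscr{C}^2/[2(n+4)\mathscr{B}]$, as required.

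There is no real obstacle. The only point of care is to split the constraint into diagonal and off-diagonal parts \emph{before} invoking Cauchy--Schwarz, so that the two distinct weights (namely $1$ and $3$) appearing in $Q$ are handled correctly; otherwise one would obtain a suboptimal constant that does not match $2(n+4)$. The hypothesis $\sum_i b_i=0$ is used at precisely one step, to reduce $\sum_{i\neq j}b_ib_j$ to $-\mathscr{B}$, and the non-degeneracy assumption $\mathscr{B}>0$ ensures the final quotient is well defined.
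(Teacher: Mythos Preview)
Your proof is correct. The paper takes a different route: it sets up a Lagrange multiplier problem, minimizing $L=\sum_i a_{ii}^2+3\sum_{i\neq j}a_{ij}^2$ over the $a_{ij}$ subject to the linear constraint $\sum_{i,j}(b_i+b_j)a_{ij}=\mathscr{C}$, solves the first-order conditions to find the multiplier $\nu=-3\mathscr{C}/[(n+4)\mathscr{B}]$, and then invokes positive definiteness of the Hessian to conclude that the critical value $3\mathscr{C}^2/[2(n+4)\mathscr{B}]$ is the minimum.

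Your weighted Cauchy--Schwarz argument and the paper's Lagrange multiplier computation are really two faces of the same coin: minimizing a positive definite quadratic form under a single linear constraint is exactly the situation in which Cauchy--Schwarz is sharp, and the Lagrange conditions recover the equality case. What your approach buys is brevity and avoidance of any second-order verification; what the paper's approach buys is an explicit description of the minimizer, which is not needed here but can be useful in other contexts. Both rely on $\sum_i b_i=0$ at exactly the same point, namely in collapsing $\sum_{i\neq j}(b_i+b_j)^2$ to $2(n-2)\mathscr{B}$.
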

\begin{proof}
Applying the Lagrange multiplier method, we compute the minimum
value $L_{min}$ of $L=\sum\limits_{i}a_{ii}^{2}+3\sum\limits_{i\neq
j}a_{ij}^{2}$ with constraints
$\sum\limits_{i}b_i=0$,\,$\sum\limits_{i}b_{i}^{2}=\mathscr{B}$,\,
$\sum\limits_{i,j}(b_{i}+b_{j})a_{ij}=\mathscr{C}.$ We consider the
function $L^*=\sum\limits_{i}a_{ii}^{2}+3\sum\limits_{i\neq
j}a_{ij}^{2}+\lambda\sum\limits_{i}b_i+\mu(\sum\limits_{i}b_{i}^{2}-\mathscr{B})
+\nu[\sum\limits_{i,j}(b_{i}+b_{j})a_{ij}-\mathscr{C}]$. At the
extreme points of $L$, we have
\begin{equation}\label{4.1}
\frac{\partial L^*}{\partial a_{ii}}=2a_{ii}+2\nu
b_{i}=0,\,\,\mbox{\ for\ }\,\,1\leq i\leq n,
\end{equation}
\begin{equation}\label{4.2}
\frac{\partial L^*}{\partial a_{ij}}=6a_{ij}+\nu(b_{i}+b_{j})=0,\,\,
\mbox{\ for\ }\,\, 1\leq i, j\leq n, \,\, i\neq j,
\end{equation}
\begin{equation}\label{4.3}
\sum\limits_{i}b_i=0,\,\,\,
\sum\limits_{i}b_{i}^{2}=\mathscr{B},\,\,\,
\sum\limits_{i,j}(b_{i}+b_{j})a_{ij}=\mathscr{C}.
\end{equation}
It follows from (\ref{4.1}) and (\ref{4.2}) that
\begin{equation}\label{4.4}
2\sum\limits_{i}a_{ii}^{2}+2\nu\sum\limits_{i}a_{ii}b_{i}=0,
\end{equation}
\begin{equation}\label{4.5}
6\sum\limits_{i\neq j}a_{ij}^{2}+\nu\sum\limits_{i\neq
j}a_{ij}(b_{i}+b_{j})=0.
\end{equation}
Hence
\begin{equation}\label{4.6}
2(\sum\limits_{i}a_{ii}^{2}+3\sum\limits_{i\neq j}a_{ij}^{2})=-\nu
\mathscr{C}.
\end{equation}
On the other hand, we have
\begin{equation}\label{4.7}
2\sum\limits_{i}b_{i}a_{ii}+2\nu\sum\limits_{i}b_{i}^{2}=0,
\end{equation}
\begin{equation}\label{4.8}
6\sum\limits_{i\neq j}(b_{i}+b_{j})a_{ij}+\nu\sum\limits_{i\neq
j}(b_{i}+b_{j})^{2}=0.
\end{equation}
Combining (\ref{4.3}), (\ref{4.7}) and (\ref{4.8}), we have
\begin{equation}\label{4.11}
\nu=-\frac{3\mathscr{C}}{(n+4)\mathscr{B}}.
\end{equation}
This together with (\ref{4.6}) and the fact that the Hessian matrix
of $L$ is identically positive definite implies that
\begin{equation}\label{4.12}
L_{min}=\frac{3\mathscr{C}^2}{2(n+4)\mathscr{B}}.
\end{equation}
\end{proof}

\section{Estimate for $A-2B$}\label{sec3}

\par It plays a crucial role to estimate for $A-2B$ in our work. Using an analogous argument as in the proof of Lemma 3.2 in
\cite{Z1}, we get the following
\begin{lemma}\label{lem3}
Let $M$ be an $n$-dimensional closed hypersurface with constant mean
curvature in the unit sphere $\mathbb{S}^{n+1}$. If
$\lambda_1^2-4\lambda_1\lambda_2\geq tS$, for some $t\in[2,
\frac{1+\sqrt{17}}{2}]$, then
$(\lambda_1^2-4\lambda_1\lambda_2)-(\lambda_1^2-4\lambda_1\lambda_i)\geq
rS$, for $i\neq1,2$, where $r=\frac{16t-8-12\sqrt{-2t^2+2t+8}}{17}$.
\end{lemma}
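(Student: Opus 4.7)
The plan is to reduce the lemma to an elementary three-variable optimization problem involving only $\lambda_1, \lambda_2$, and $\lambda_i$. Since both hypothesis and conclusion are invariant under $\lambda_j \mapsto -\lambda_j$, I may assume $\lambda_1 > 0$ (the degenerate case $\lambda_1 = 0$ forces $S = 0$ and is trivial). Writing $p = \lambda_1$, $q = \lambda_2$, $a = \lambda_i$, the only information I use from the geometric setup is the hypothesis $p^2 - 4pq \geq tS$ together with the consequence $p^2 + q^2 + a^2 \leq S$ of $\sum_j \lambda_j^2 = S$. The goal is then to show $4p(a-q) \geq rS$.

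Normalizing by $\sqrt{S}$ yields the compact problem: minimize $4u(w-v)$ subject to $u^2 - 4uv \geq t$, $u^2 + v^2 + w^2 \leq 1$, and $u>0$. A short perturbation argument shows that both inequalities must be tight at the minimum, so $v = (u^2-t)/(4u)$ and $w = -\sqrt{1 - u^2 - v^2}$ (the negative root, in order to minimize $w - v$). The key algebraic identity is
$$16\, u^2 \bigl(1 - u^2 - v^2\bigr) \;=\; -17 V^2 + (16+2t)V - t^2 \;=:\; \psi(V), \qquad V := u^2,$$
which collapses the objective to the one-variable function $\Psi(V) = (t - V) - \sqrt{\psi(V)}$ on the interval where $\psi(V) \geq 0$.

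The critical equation $\Psi'(V) = 0$ becomes $\psi'(V)^2 = 4\psi(V)$ together with $\psi'(V) < 0$. Using the centered variable $y = V - (8+t)/17$, the parabola reduces cleanly to $1224\, y^2 = 64(4+t-t^2)/17$, giving $y = 2\sqrt{-2t^2+2t+8}/51$, and hence $V = (24 + 3t + 2\sqrt{-2t^2+2t+8})/51$. At this $V$ one finds $\psi(V) = 4(-2t^2+2t+8)/9$, and collecting the pieces yields exactly
$$\Psi(V) \;=\; \frac{48t - 24 - 36\sqrt{-2t^2+2t+8}}{51} \;=\; \frac{16t - 8 - 12\sqrt{-2t^2+2t+8}}{17} \;=\; r,$$
which is the desired lower bound on $4u(w-v)$, i.e., $4\lambda_1(\lambda_i - \lambda_2) \geq rS$.

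The main obstacle is justifying that both constraints are simultaneously tight at the minimum and confirming that the critical point lies inside the admissible interval $[V_-, V_+]$ determined by $\psi(V) \geq 0$; nonemptiness of this interval requires $-2t^2 + 2t + 8 \geq 0$, which is precisely the upper bound $t \leq (1 + \sqrt{17})/2$ built into the hypothesis. A boundary check ($\Psi(V) = t - V$ at the two zeros of $\psi$, which is easily seen to exceed $r$) then certifies that the interior critical point is indeed the global minimum of $\Psi$. The restriction $t \geq 2$ plays no role in the algebra; it is included only to ensure $r \geq 0$ so that the resulting conclusion is non-trivial.
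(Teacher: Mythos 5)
The paper itself does not supply a proof of this lemma; it simply says ``using an analogous argument as in the proof of Lemma~3.2 in [Z1],'' so there is nothing in the text to compare your argument against line by line. What you have written is therefore a self-contained replacement, and it is correct. Your reduction is clean: rewriting the conclusion as $4\lambda_1(\lambda_i-\lambda_2)\geq rS$, discarding the trace constraint (which cannot appear in the answer anyway, since $r$ depends only on $t$), normalizing by $\sqrt{S}$, and observing that only the single inequality $\lambda_1^2+\lambda_2^2+\lambda_i^2\leq S$ is needed, collapses the problem to a two-constraint optimization in $(u,v,w)$. The tightness-of-both-constraints step is sound: with $t\geq 2$ one has $u^2\leq 1<t\leq u^2-4uv$, so $v<0$ and $u$ is bounded away from zero, the feasible region is compact, $w$ may be taken $\leq 0$, and the two perturbations (push $w$ inward; trade an increase in $v$ against a decrease in $w$ along the sphere) both strictly decrease the objective. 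Eliminating $v$ and $w$ then gives the one-variable function $\Psi(V)=(t-V)-\sqrt{\psi(V)}$ with $\psi(V)=-17V^2+(16+2t)V-t^2$, and your centering $y=V-(8+t)/17$ solves $\psi'(V)^2=4\psi(V)$ exactly, yielding $V_0=(24+3t+2\sqrt{-2t^2+2t+8})/51$, $\psi(V_0)=\tfrac{4}{9}(-2t^2+2t+8)$, and $\Psi(V_0)=r$; the sign choice $y>0$ is the one compatible with $\psi'<0$, $V_0$ lies strictly inside $[V_-,V_+]$ because $2\sqrt{2}<12$, and the endpoint values $t-V_\pm=(16t-8\mp 4\sqrt{4+t-t^2})/17$ both exceed $r$ since $4<12\sqrt{2}$. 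Compared with the cited source (which, being about minimal hypersurfaces, presumably carries the traceless constraint $\sum\lambda_j=0$ through a Lagrange computation), your version is both a little more general and more transparent: dropping the trace constraint enlarges the feasible set, so the bound you prove is a priori weaker, yet it reproduces the same constant $r$ --- which simultaneously shows the trace constraint is inactive at the extremum and explains why no dependence on $H$ can appear in the statement.

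Two small points worth tightening if this were to go into print: (i) you should state explicitly that the feasible set in $(u,v,w)$ is compact and nonempty (the bound $u^2(1-u^2)\geq(t-1)^2/16$ gives two-sided bounds on $u$, and nonemptiness under $t\leq(1+\sqrt{17})/2$ follows from the hypothesis), so that a minimizer actually exists before the perturbation argument is invoked; (ii) the perturbation that tightens $u^2-4uv\geq t$ needs a one-line treatment of the edge case $w=0$, where the compensating decrease in $w$ must be of order $\sqrt{\epsilon}$ rather than $\epsilon$ --- it still works, but the scaling is different.
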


For the low dimensional cases, we give an up bound for $A-2B$.
\begin{lemma}\label{lem4}
If $4\leq n\leq5$, then
$$3(A-2B)\leq[2+\eta(n)]S|\nabla h|^2,$$ where $\eta(4)=0.16$ and $\eta(5)=0.23$.
\end{lemma}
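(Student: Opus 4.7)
We work pointwise after diagonalizing $h$ so that $h_{ij}=\lambda_i\delta_{ij}$. The starting point is the identity obtained by symmetrizing $A=\sum h_{ijk}^{2}\lambda_i^{2}$ and $B=\sum h_{ijk}^{2}\lambda_i\lambda_j$ in each of the three indices (using the Codazzi symmetry of $h_{ijk}$):
\[
3(A-2B)\;=\;\sum_{i,j,k}h_{ijk}^{2}\,Q_{ijk},\qquad
Q_{ijk}:=2(\lambda_i^{2}+\lambda_j^{2}+\lambda_k^{2})-(\lambda_i+\lambda_j+\lambda_k)^{2}.
\]
A direct check shows $Q_{iii}\le 0$, $Q_{ijk}\le 2S$ when $i,j,k$ are pairwise distinct, and $Q_{iik}=\lambda_k^{2}-4\lambda_i\lambda_k$ otherwise. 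After relabeling so that $\lambda_1^{2}-4\lambda_1\lambda_2=\max_{i\ne j}(\lambda_i^{2}-4\lambda_i\lambda_j)$, set
\[
t:=\frac{\lambda_1^{2}-4\lambda_1\lambda_2}{S}\in\bigl[0,\tfrac{1+\sqrt{17}}{2}\bigr],
\]
so that $Q_{ijk}\le\max(tS,2S)$ at every point.

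If $t\le 2+\eta(n)$, the pointwise bound yields the claim at once, so we may assume $2+\eta(n)<t\le(1+\sqrt{17})/2$, which makes Lemma \ref{lem3} applicable. In this regime
\[
Q_{ii1}=\lambda_1^{2}-4\lambda_1\lambda_i\le(t-r)S\qquad(i\ne 1,2),
\]
where $r=(16t-8-12\sqrt{-2t^{2}+2t+8})/17$. Split $\sum h_{ijk}^{2}Q_{ijk}$ into three groups: the extremal group, made of the ordered triples $(2,2,1),(2,1,2),(1,2,2)$ and contributing $3h_{221}^{2}\cdot tS$; the intermediate group of patterns $h_{ii1}$ with $i\ge 3$, whose total contribution is at most $3\sum_{i\ge 3}h_{ii1}^{2}\cdot(t-r)S$; and the remainder, whose $Q$-values do not exceed $2S$. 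The desired inequality reduces to showing that the excess
\[
[t-(2+\eta(n))]\,S\cdot 3h_{221}^{2}
\]
is absorbed by the savings $rS\cdot 3\sum_{i\ge 3}h_{ii1}^{2}$ plus $(t-2)S$ times the remainder. Here the Codazzi trace identity $\sum_i h_{ii1}=0$ already bounds $h_{221}$ in terms of $h_{111}$ and the $h_{ii1}$ for $i\ge 3$, and Proposition \ref{prop2}, applied with $a_{ij}=h_{ij1}$ and $b_i=\lambda_i-H$ (so that $\mathscr B=\Phi$ and $\mathscr C$ is a linear combination proportional to $\lambda_1 h_{221}$), sharpens this to
\[
\sum_i h_{ii1}^{2}+3\sum_{i\ne j}h_{ij1}^{2}\;\ge\;\frac{3\mathscr C^{2}}{2(n+4)\Phi}.
\]
Substituting and collecting like terms, the question reduces to a single algebraic inequality
\[
\bigl(t-2-\eta(n)\bigr)\bigl[3(n-2)t+(n+2)\eta(n)+10-4n\bigr]\;\le\;(2+4\eta(n))\,r,
\]
which, using the identity $4t-2-3\sqrt{-2t^{2}+2t+8}=\tfrac{17}{4}r$, is exactly $h_n(t)\le 0$ from Proposition \ref{prop1}(ii).

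The hard part will be the second step: one has to select $a_{ij}$ and $b_i$ in Proposition \ref{prop2} so that the combinatorial factor $3(n-2)t+(n+2)\eta(n)+10-4n$ of $f_n(t)$ really does emerge from the ratio $\mathscr C^{2}/\mathscr B$, and one must verify that the remainder contributes at least the claimed $(t-2)S$ savings (rather than being undone by the extremal pattern through cross terms). The low-dimension hypothesis $n\in\{4,5\}$ enters only via these coefficients, which are precisely tuned so that $h_n(t)\le 0$ holds on $[2,(1+\sqrt{17})/2]$.
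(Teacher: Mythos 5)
Your decomposition of $3(A-2B)=\sum_{i,j,k}h_{ijk}^2Q_{ijk}$ with $Q_{iii}\le 0$, $Q_{iik}=\lambda_k^2-4\lambda_i\lambda_k$, $Q_{ijk}\le 2S$ for distinct indices, and your identification of the final algebraic inequality as exactly $h_n(t)\le 0$ from Proposition~\ref{prop1}(ii), are both on target. But the route you sketch between these two endpoints has two genuine defects.

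First, the three-group split is incorrect. After normalizing so that $\lambda_1^2-4\lambda_1\lambda_2=tS$ is the global maximum of the $Q_{iik}$, you only isolate the extremal triple $(2,2,1)$ and the intermediate triples $(i,i,1)$ with $i\ge 3$, and declare that the ``remainder'' has $Q$-values at most $2S$. This is false: the triples $(i,i,j)$ with $j\ge 2$ belong to your remainder, and for them all you know is $Q_{iij}\le tS$, which can exceed $2S$ (take, e.g., $\lambda_1=a$, $\lambda_2=-0.9a$, other eigenvalues near $0$; then $Q_{112}\approx 2.44\,S$ while $Q_{221}\approx 2.54\,S$ is the maximum). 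Lemma~\ref{lem3} only controls the gap $Q_{221}-Q_{ii1}$ for $i\ne 1,2$; it says nothing about $Q_{iij}$ for $j\ge 2$. The paper avoids this entirely by running the same argument \emph{separately for each index $j$}: for each $j$ it defines $f_j=\sum_{i\ne j}(\lambda_j^2-4\lambda_i\lambda_j)h_{iij}^2-\lambda_j^2h_{jjj}^2$ and proves $f_j\le(2+\eta(n))S(\sum_{i\ne j}h_{iij}^2+\frac13 h_{jjj}^2)$; only after that does it sum over $j$ and add the distinct-index terms (each bounded by $2S$). Your single-extremal split cannot absorb the excesses coming from several distinct $j$'s simultaneously.

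Second, the appeal to Proposition~\ref{prop2} is both unnecessary and unjustified. With $a_{ij}=h_{ij1}$, $b_i=\mu_i$, the constraint quantity $\mathscr{C}=\sum_{i,j}(\mu_i+\mu_j)h_{ij1}$ has no reason to collapse to a multiple of $\lambda_1 h_{221}$; you assert this but do not verify it, and indeed acknowledge that the ``hard part'' of making the combinatorial factor $3(n-2)t+(n+2)\eta(n)+10-4n$ appear from $\mathscr{C}^2/\mathscr{B}$ is unresolved. The paper gets this factor by a much simpler mechanism: the Codazzi trace constraint $\sum_i h_{ii1}=0$ gives $(h_{111}+h_{221})^2=(\sum_{i\ne 1,2}h_{ii1})^2$, hence by Cauchy--Schwarz $\sum_{i\ne 1,2}h_{ii1}^2\ge\frac{(1+z)^2}{n-2}h_{221}^2$ where $z=h_{111}/h_{221}$, together with $\lambda_1^2\ge(t-2)S$. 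Substituting these into $f_1-[2+\eta]Sw$ produces a quadratic $K(n,t,z)$ in $z$, whose maximum over $z$ equals $\frac{h_n(t)}{51L(n,t)}$; Proposition~\ref{prop1}(ii) then closes the argument. Your proposal is missing the $z$-parametrization and the maximization step, which is precisely where the factor you mention is manufactured, and Proposition~\ref{prop2} plays no role in Lemma~\ref{lem4} (it is used later, in the proof of the Main Theorem, with $a_{ij}=u_{iijj}$, not $h_{ij1}$).
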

\begin{proof}
We put $$w_i=h_{ii1},\,\,\,\,\,
w=\sum\limits_{i\neq1}w_i^2+\frac{1}{3}w_1^2,$$
$$f=\sum\limits_{i\neq1}(\lambda_1^2-4\lambda_1\lambda_i)w_i^2-\lambda_1^2w_1^2.$$
If $\lambda_1^2-4\lambda_1\lambda_i\leq[2+\eta(n)]S$, for any
$i\neq1$, we have $f\leq[2+\eta(n)]Sw$. Otherwise, without loss of
generality, we assume that $\lambda_1^2-4\lambda_1\lambda_2=tS$ for
some $t\in[2, \frac{1+\sqrt{17}}{2}]$, and $w_1=zw_2$. Since $H$ is
a constant, we have $(w_1+w_2)^2=(\sum\limits_{i\neq1,2}w_i)^2$.
Hence
\begin{equation}\label{3.40}
\sum\limits_{i\neq1,2}w_i^2\geq\frac{(1+z)^2}{n-2}w_2^2,
\end{equation}
\begin{equation}\label{3.41}
\lambda_1^2\geq(t-2)S.
\end{equation}
From (\ref{3.40}), (\ref{3.41}) and Lemma \ref{lem3}, we have
\begin{eqnarray}\label{3.42}
f-[2+\eta(n)]Sw&\leq&\nonumber[t-2-\eta(n)]Sw_2^2+[t-r-2-\eta(n)]S\sum\limits_{i\neq1,2}w_i^2\\
& &\nonumber-(t-2+\frac{2+\eta(n)}{3})Sw_1^2\\
&\leq&\nonumber[t-2-\eta(n)]Sw_2^2+\frac{t-r-2-\eta(n)}{n-2}(1+z)^2Sw_2^2\\
& &-(t-2+\frac{2+\eta(n)}{3})z^2Sw_2^2,
\end{eqnarray}
where $r=\frac{16t-8-12\sqrt{-2t^2+2t+8}}{17}$. Set
$K(n,t,z)=t-2-\eta(n)+\frac{t-r-2-\eta(n)}{n-2}(1+z)^2-(t-2+\frac{2+\eta(n)}{3})z^2$.
(\ref{3.42}) becomes
\begin{equation}\label{3.43}
f-[2+\eta(n)]Sw\leq K(n,t,z)Sw_2^2.
\end{equation}
Noting that $\frac{\partial K(n,t,z)}{\partial z}|_{z=z_0}=0$, we
have
\begin{equation}\label{3.44}
K(n,t,z)\leq K(n,t,z_0)=\frac{h_n(t)}{51L(n,t)},
\end{equation}
where $L(n,t)=r+2+\eta(n)-t+(n-2)(t-2+\frac{2+\eta(n)}{3})$, and
$h_n(t)$ is defined as in Proposition \ref{prop1} (ii). This
together with Proposition \ref{prop1} (ii) implies
\begin{equation}\label{3.45}
f\leq[2+\eta(n)]Sw.
\end{equation}
Similarly, for any fixed $j$, we have
\begin{eqnarray}\label{31}
f_j&=&\nonumber\sum\limits_{i\neq
j}(\lambda_j^2-4\lambda_j\lambda_i)h_{iij}^2-\lambda_j^2h_{jjj}^2\\
&\leq&(2+\eta(n))S(\sum\limits_{i\neq
j}h_{iij}^2+\frac{1}{3}h_{jjj}^2).
\end{eqnarray}
Hence
\begin{eqnarray}\label{32}
3(A-2B)&=&\nonumber\sum\limits_{i,j,k
\,\,distinct}[2(\lambda_i^2+\lambda_j^2+\lambda_k^2)-(\lambda_i+\lambda_j+\lambda_k)^2]h_{ijk}^2\\
&
&\nonumber-3\sum\limits_{i}\lambda_i^2h_{iii}^2+3\sum\limits_{j}\sum\limits_{i\neq
j}(\lambda_j^2-4\lambda_i\lambda_j)h_{iij}^2\\
&\leq&\nonumber2S\sum\limits_{i,j,k
\,\,distinct}h_{ijk}^2+3\sum\limits_{j}\sum\limits_{i\neq
j}[(\lambda_j^2-4\lambda_i\lambda_j)h_{iij}^2-\lambda_j^2h_{jjj}^2]\\
&\leq&\nonumber(2+\eta(n))S(\sum\limits_{i,j,k
\,\,distinct}h_{ijk}^2+3\sum\limits_{j}\sum\limits_{i\neq
j}h_{iij}^2+\sum\limits_{j}h_{jjj}^2)\\
&=&(2+\eta(n))S\sum\limits_{i,j,k}h_{ijk}^2.
\end{eqnarray}
This proves Lemma \ref{lem4}.

\end{proof}

For the higher dimensional cases, we obtain the following estimate
of $A-2B$.
\begin{lemma}\label{lem2}
If $n\geq6$ and $n\leq S\leq \frac{16}{15}n$, then
$$3(A-2B)\leq [S+4+C_3(n)G^{1/3}+q_5(n,H)]|\nabla h|^2.$$
Here
$C_3(n)=\Big(\frac{3-\sqrt{6}-4p}{\sqrt{6}-1+13p}(6-\sqrt{6}-13p)^2\Big)^{1/3}$,
$p=\frac{1}{13(n-2)}$, $q_5(n,H)=\\
\Big\{2(\frac{16n}{15})^{2/3}(\frac{\sqrt{15n}}{2n-2})^{1/3}
p_{1}^{1/3}+(\frac{64n}{15}-4)^{2/3}(\frac{2\sqrt{15n}}{n-1}+\frac{n^2H}{n-2}+\frac{2n}{n-2}\sqrt{\frac{32n}{15}})^{1/3}\Big\}H^{1/3}$
and
$p_1=5+\frac{3}{2}(\sqrt{\frac{15n-16}{5n-16}}-1)+\frac{3}{2}(\sqrt{\frac{15n-16}{5n-16}}-1)^{-1}$.
\end{lemma}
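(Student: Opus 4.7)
The proof follows the parameter method outlined in the introduction. I would start from the same algebraic decomposition of $3(A-2B)$ used in the proof of Lemma~\ref{lem4}:
\begin{align*}
3(A-2B) &= \sum_{i,j,k\,\text{distinct}}\bigl[2(\lambda_i^2+\lambda_j^2+\lambda_k^2)-(\lambda_i+\lambda_j+\lambda_k)^2\bigr]h_{ijk}^2 \\
&\quad - 3\sum_i \lambda_i^2 h_{iii}^2 + 3\sum_j\sum_{i\neq j}(\lambda_j^2-4\lambda_i\lambda_j)h_{iij}^2.
\end{align*}
The first step is to expose the factor $1+\lambda_i\lambda_j$ in every coefficient via the identity $\lambda_j^2-4\lambda_i\lambda_j = \lambda_j^2 + 4 - 4(1+\lambda_i\lambda_j)$ and the analogous rewriting of the distinct-triple coefficient using $2(\lambda_i^2+\lambda_j^2+\lambda_k^2)-(\lambda_i+\lambda_j+\lambda_k)^2 = (\lambda_i^2+\lambda_j^2+\lambda_k^2)+6-2\sum_{a<b}(1+\lambda_a\lambda_b)$. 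Bounding $\lambda_i^2 \le S$ pointwise isolates a principal part controlled by $(S+4)|\nabla h|^2$, with a negative slack coming from the $-3\lambda_i^2 h_{iii}^2$ term to absorb the gap between $+6$ and $+4$; what remains are correction pieces linear in $(1+\lambda_i\lambda_j)$ times squared covariant-derivative terms.

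Next I would bound those correction pieces by a Hölder estimate with exponents $(3,3/2)$ of the shape
\[
\sum_{i,j}(1+\lambda_i\lambda_j)\,X_{ij}
\le \Bigl(\sum_{i,j}(\lambda_i-\lambda_j)^2(1+\lambda_i\lambda_j)^2\Bigr)^{\!1/3}\Bigl(\sum_{i,j}\frac{X_{ij}^{3/2}}{|\lambda_i-\lambda_j|}\Bigr)^{\!2/3},
\]
whose first factor is exactly $G^{1/3}$. To turn this into a usable estimate I would insert a scalar parameter $p>0$ that balances the weights between the distinct-index contribution (from Part~1) and the paired-index contribution (from Part~3), and bound the inner $(\cdot)^{2/3}$ factor using the constant-mean-curvature constraint $\sum_i h_{iij}=0$ together with the hypothesis $n\le S \le \tfrac{16}{15}n$. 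Optimising over $p$ yields the critical value $p=\tfrac{1}{13(n-2)}$ and the claimed constant $C_3(n)=\bigl(\tfrac{3-\sqrt 6 - 4p}{\sqrt 6-1+13p}(6-\sqrt 6 - 13p)^2\bigr)^{1/3}$.

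For the $H$-dependent remainder, the cubic invariants $\sum\lambda_i^3$ and $\sum\mu_i^3$ that survive the reduction are controlled via Proposition~\ref{prop1}(i), which feeds each such term a factor of order $|H|$ times a bounded geometric quantity. A second Hölder step, followed by an AM-GM optimisation over a second parameter that produces the auxiliary expression $p_1 = 5 + \tfrac32\bigl(\sqrt{\tfrac{15n-16}{5n-16}}-1\bigr) + \tfrac32\bigl(\sqrt{\tfrac{15n-16}{5n-16}}-1\bigr)^{-1}$, collapses all such terms into a single $H^{1/3}$-scaled error that matches $q_5(n,H)$.

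The main obstacle is bookkeeping rather than ideas: tracking every cross term through the two Hölder steps and matching the two separate parameter optimisations to produce \emph{exactly} the stated forms of $C_3(n)$ and $q_5(n,H)$. The structural ingredients---algebraic splitting, Hölder duality against $G$, and parameter tuning---follow the template set by Ding--Xin~\cite{DX} for the minimal case, but the presence of a non-zero $H$ forces a clean separation of the ``minimal-like'' geometry from the $H^{1/3}$-perturbation, which is where the bulk of the routine but delicate computation lies.
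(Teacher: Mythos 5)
Your high-level outline (start from the same decomposition of $3(A-2B)$, couple to $G^{1/3}$, produce an $H^{1/3}$-scaled error) is in the right neighborhood, but several of the specific mechanisms you invoke do not match what the paper does in this lemma, and a few would lead you astray if you tried to carry them out.

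\emph{No H\"older step against $G$.} The paper does not bound the sum $\sum (1+\lambda_i\lambda_j)X_{ij}$ by H\"older duality. Instead it bounds the two \emph{coefficients}
$\varphi = \lambda_i^2+\lambda_j^2+\lambda_k^2 - 2\lambda_i\lambda_j - 2\lambda_j\lambda_k - 2\lambda_i\lambda_k$ (distinct $i,j,k$) and
$\psi = \lambda_j^2 - 4\lambda_i\lambda_j$ pointwise, by a sign/ratio case analysis. If all signs of the $\lambda$'s agree, $\varphi\le S$ trivially; otherwise one writes $\lambda_i=-x\lambda_j$, $\lambda_k=-y\lambda_j$ and shows $\varphi\le S+4+2(a+c)$ with $a=x\lambda_j^2-1$, $c=(1-x)y\lambda_j^2-1$. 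Then one cubes $a+c$ and compares the result to $G\ge 2[(x+1)^2a^2+(y+1)^2b^2]\lambda_j^2$ via Young's inequality with a free parameter $\epsilon$. For $\psi$ one only needs to work on $\{\psi>S+4\}$, where $\lambda_i=-t\lambda_j$ with $t>0$; one then maximizes an explicit rational function $\xi(t)=\frac{(t-12p)(4-t)^3}{(1+t)^2}$ over $t$. The whole argument is a pointwise, case-by-case estimate of $\varphi$ and $\psi$ against $S+4+C_3(n)G^{1/3}+(\text{error})$, not an averaged H\"older bound over indices.

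\emph{The parameter $p$ is not obtained by ``optimising over $p$''.} It enters as a fixed algebraic choice: one needs $-\frac{n-1}{n-2}t^2+\frac{4n-6}{n-2}t-\frac{4}{n-2}\le (t-12p)(4-t)$ pointwise in $t$, which for $p=\frac{1}{13(n-2)}$ reduces to $p(13t^2-14t+4)\ge 0$ (true: negative discriminant). The genuine optimization in the paper is over the eigenvalue ratio $t$, not over $p$, and it is that maximization that produces the $\sqrt 6$'s in $C_3(n)$.

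\emph{Proposition 1(i) is not used in Lemma~2.} The $H^{1/3}$ pieces of $q_5$ do not come from controlling cubic invariants; they come from the constraint $n\le S\le\frac{16}{15}n$, which yields $\frac{15}{16(n-1)}\lambda_j^2\le 1+q_1$ with $q_1=O(H)$, and then propagate through the cubing step and the $r_2,r_3$ bounds. Proposition~1(i) appears only later, in the proof of the Main Theorem.

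\emph{The $h_{iii}^2$ slack is not used.} In (3.39) the paper simply discards the $-3\lambda_i^2 h_{iii}^2$ contribution (it is $\le 0$) rather than borrowing from it to absorb the $+6\to+4$ gap; that gap is closed within the estimate $\varphi\le S+4+2(a+c)$ itself.

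Your one accurate specific is the $\epsilon$-balancing that generates $p_1$: indeed $\epsilon=\sqrt{\tfrac{15n-16}{5n-16}}-1$ is chosen to equalize two halves of a Young's inequality, and $p_1=5+\tfrac{3\epsilon}{2}+\tfrac{3}{2\epsilon}$ results. But to reach the stated constants you would need to replace the H\"older/cubic-invariant scaffolding with the pointwise sign/ratio analysis and the two explicit one-variable optimizations over $\epsilon$ and $t$.
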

\begin{proof}
\par For fixed distinct $i,j,k,$ we put
\begin{eqnarray}\label{3.7}
\varphi&=
&\nonumber\lambda_i^2+\lambda_j^2+\lambda_k^2-2\lambda_i\lambda_j-2\lambda_i\lambda_k-2\lambda_j\lambda_k,\\
\psi&=&\lambda_j^2-4\lambda_i\lambda_j.
\end{eqnarray}
If $\lambda_i, \lambda_j, \lambda_k$ have the same sign, it is easy
to see that $\varphi\leq S$. Without loss of generality, we suppose
\begin{equation}\label{3.8}
\lambda_i\lambda_j\leq\lambda_j\lambda_k\leq0,
\lambda_i\lambda_k\geq0.
\end{equation}
Putting $\lambda_i=-x\lambda_j, \lambda_k=-y\lambda_j, x\geq
y\geq0$, we have
\begin{eqnarray}\label{3.9}
\varphi&=&\nonumber\lambda_{i}^{2}+\lambda_{j}^{2}+\lambda_{k}^{2}+2(x+y-xy)\lambda_{j}^{2}\\
&\leq&S+4+2[x\lambda_{j}^{2}-1+(1-x)y\lambda_{j}^{2}-1].
\end{eqnarray}
Setting $a=x\lambda_{j}^{2}-1, b=y\lambda_{j}^{2}-1,
c=(1-x)y\lambda_{j}^{2}-1$, we rewrite (\ref{3.9}) as follow
\begin{equation}\label{3.10}
\varphi\leq S+4+2(a+c).
\end{equation}
Since
\begin{equation}\label{3.11}
\lambda_{j}^{2}+\frac{1}{n-1}(nH-\lambda_{j})^2\leq
S\leq\frac{16n}{15},
\end{equation}
we have
\begin{eqnarray}\label{3.12}
\lambda_{j}^{2}&\leq&\nonumber\frac{16(n-1)}{15}+2H\lambda_j-nH^2\\
&\leq&\frac{16(n-1)}{15}+2H\sqrt{\frac{16n}{15}}.
\end{eqnarray}
Hence
\begin{equation}\label{3.13}
\frac{15}{16(n-1)}\lambda_{j}^{2}\leq1+q_1,
\end{equation}
where $q_1=q_1(n,H):=\frac{\sqrt{15n}}{2(n-1)}H$.
\par When $c\geq0$, we get $x\leq1$ and $a\geq b\geq c\geq0$. Then we
have
\begin{eqnarray}\label{3.14}
c&\leq&\nonumber x(1-x)\lambda_{j}^{2}+q_1-\frac{15}{16(n-1)}\lambda_{j}^{2}\\
&\leq&\nonumber\frac{4n-19}{32n-17}(x+1)^2\lambda_{j}^{2}+q_1\\
&\leq&(1-\frac{16}{5n})\frac{2n-2}{16n-1}(x+1)^2\lambda_{j}^{2}+q_1.
\end{eqnarray}
Similarly, we have
\begin{eqnarray}\label{3.15}
a&\leq&\nonumber x\lambda_{j}^{2}+q_1-\frac{15}{16(n-1)}\lambda_{j}^{2}\\
&\leq&\frac{4n-4}{16n-1}(x+1)^2\lambda_{j}^{2}+q_1.
\end{eqnarray}
and
\begin{eqnarray}\label{3.16}
b&\leq&\nonumber y\lambda_{j}^{2}+q_1-\frac{15}{16(n-1)}\lambda_{j}^{2}\\
&\leq&\frac{4n-4}{16n-1}(y+1)^2\lambda_{j}^{2}+q_1.
\end{eqnarray}
So,
\begin{eqnarray}\label{3.17}
(a+c)^3&=&\nonumber a^3+c^3+3(a^2c+ac^2)\\
&\leq&\nonumber a^3+c^3+3(a^2c+\frac{\epsilon}{2}a^2c+\frac{1}{2\epsilon}c^3)\\
&\leq&\nonumber a^3+b^3+3\Big\{(1+\frac{\epsilon}{2})a^2\Big[(1-\frac{16}{5n})\frac{2n-2}{16n-1}(x+1)^2\lambda_{j}^{2}+q_1\Big]\\
& &\nonumber+\frac{1}{2\epsilon}b^2\Big[\frac{4n-4}{16n-1}(y+1)^2\lambda_{j}^{2}+q_1\Big]\Big\}\\
&=&\nonumber
a^3+b^3+3\frac{2n-2}{16n-1}\Big[(1+\frac{\epsilon}{2})a^2(1-\frac{16}{5n})(x+1)^2\\
& &
+\frac{1}{\epsilon}b^2(y+1)^2\Big]\lambda_{j}^{2}+\Big[3(1+\frac{\epsilon}{2})a^2+\frac{3}{2\epsilon}b^2\Big]q_1,
\end{eqnarray}
for $\epsilon>0$. Letting
$(1+\frac{\epsilon}{2})(1-\frac{16}{5n})=\frac{1}{\epsilon}$, we
have $\epsilon=\sqrt{\frac{15n-16}{5n-16}}-1$. Since
\begin{eqnarray}\label{3.18}
G&\geq&\nonumber 2(\lambda_i-\lambda_j)^2(\lambda_i\lambda_j+1)^2+2(\lambda_j-\lambda_k)^2(\lambda_j\lambda_k+1)^2\\
&=&2[(x+1)^2a^2+(y+1)^2b^2]\lambda_{j}^2,
\end{eqnarray}
we have
\begin{eqnarray}\label{3.19}
(a+c)^3&\leq&\nonumber a^3+b^3+3\frac{2n-2}{(16n-1)\epsilon}[a^2(x+1)^2+b^2(y+1)^2]\lambda_{j}^{2}\\
& &\nonumber+\Big[3(1+\frac{\epsilon}{2})a^2+\frac{3}{2\epsilon}b^2\Big]q_1\\
&\leq&\nonumber
a^2\Big[\frac{4n-4}{16n-1}(x+1)^2\lambda_{j}^{2}+q_1\Big]
+b^2\Big[\frac{4n-4}{16n-1}(y+1)^2\lambda_{j}^{2}+q_1\Big]\\
&
&\nonumber+3\frac{2n-2}{(16n-1)\epsilon}[a^2(x+1)^2+b^2(y+1)^2]\lambda_{j}^{2}\\
& &\nonumber
+\Big[3(1+\frac{\epsilon}{2})a^2+\frac{3}{2\epsilon}b^2\Big]q_1\\
&=&\nonumber\frac{2n-2}{16n-1}(2+\frac{3}{\epsilon})[a^2(x+1)^2+b^2(y+1)^2]\lambda_{j}^{2}\\
& &\nonumber+\Big[(4+\frac{3\epsilon}{2})a^2+(1+\frac{3}{2\epsilon})b^2\Big]q_1\\
&\leq&\frac{n-1}{16n-1}(2+\frac{3}{\epsilon})G+q_2,
\end{eqnarray}
where
$q_2=q_2(n,H,a,b)=\Big[(4+\frac{3\epsilon}{2})a^2+(1+\frac{3}{2\epsilon})b^2\Big]q_1$.
\par When $c<0$ and $a\geq0$, (\ref{3.19}) becomes
\begin{equation}\label{3.19-2}
(a+c)^3\leq a^3\leq\frac{n-1}{16n-1}(2+\frac{3}{\epsilon})G+q_2.
\end{equation}
This yields
\begin{eqnarray}\label{3.20}
a+c&\leq&\nonumber (\frac{n-1}{16n-1}(2+\frac{3}{\epsilon})G+q_2)^{1/3}\\
&\leq&
\Big[\frac{n-1}{16n-1}(2+\frac{3}{\epsilon})G\Big]^{1/3}+q_{2}^{1/3}.
\end{eqnarray}
So,
\begin{eqnarray}\label{3.21}
\varphi&\leq&\nonumber S+4+2\Big[\frac{n-1}{16n-1}(2+\frac{3}{\epsilon})G\Big]^{1/3}+2q_{2}^{1/3}\\
&=& S+4+C^{'}_{3}(n)G^{1/3}+2q_{2}^{1/3}.
\end{eqnarray}
Here $C^{'}_{3}(n)=\frac{8(n-1)}{16n-1}(2+\frac{3}{\epsilon})$.
\par When $c<0$ and $a<0$, it's obvious that (\ref{3.21}) holds.

\par To derive an upper bound for $\psi$, it's sufficient to estimate $\psi$ on $T=\{x\in M;\,\,\psi>S+4\}$.
At a fixed point $x\in T$, we have $\psi>S+4$. This implies
$\lambda_i\lambda_j<0$. Thus, there exists $t>0$, such that
$\lambda_i=-t\lambda_j$. Then we get
\begin{eqnarray}\label{3.22}
S&\geq&\nonumber\lambda_{i}^{2}+\lambda_{j}^{2}+\frac{1}{n-2}(nH-\lambda_{i}-\lambda_{j})^2\\
&=&\frac{n-1}{n-2}(\lambda_{i}^{2}+\lambda_{j}^{2})+\frac{2}{n-2}\lambda_{i}\lambda_{j}
+\frac{n^2H^2}{n-2}-\frac{2nH}{n-2}(\lambda_i+\lambda_j).
\end{eqnarray}
Hence
\begin{eqnarray}\label{3.23}
\psi&\leq&\nonumber
S-\frac{n-1}{n-2}\lambda_{i}^{2}-\frac{1}{n-2}\lambda_{j}^{2}-\frac{2}{n-2}\lambda_{i}\lambda_{j}
-r_1-4\lambda_{i}\lambda_{j}\\
&=&S+\Big(-\frac{n-1}{n-2}t^2+\frac{4n-6}{n-2}t-\frac{1}{n-2}\Big)\lambda_{j}^{2}-r_1,
\end{eqnarray}
where
$r_1=r_1(n,H,\lambda_i,\lambda_j)=\frac{n^2H^2}{n-2}-\frac{2nH}{n-2}(\lambda_i+\lambda_j)$.
From (\ref{3.13}), we have
\begin{eqnarray}\label{3.24}
\psi&\leq&\nonumber
S+4+\Big(-\frac{n-1}{n-2}t^2+\frac{4n-6}{n-2}t\\
& &\nonumber-\frac{1}{n-2}\Big)\lambda_{j}^{2}-\frac{15}{4n-4}\lambda_{j}^{2}+4q_1-r_1\\
&\leq&\nonumber
S+4+\Big(-\frac{n-1}{n-2}t^2+\frac{4n-6}{n-2}t\\
& &-\frac{4}{n-2}\Big)\lambda_{j}^{2}+r_2.
\end{eqnarray}
Here $r_2=4q_1-r_1$. Since
\begin{equation}\label{3.25}
\psi\leq
S-\lambda_{i}^{2}-4\lambda_{i}\lambda_{j}=S+(4t-t^2)\lambda_{j}^{2}
\end{equation}
and
\begin{equation}\label{3.26}
-\frac{n-1}{n-2}t^2+\frac{4n-6}{n-2}t-\frac{4}{n-2}\leq\Big(t-\frac{12}{13(n-2)}\Big)(4-t),
\end{equation}
we obtain
\begin{eqnarray}\label{3.27}
(\psi-S-4)^3&\leq&\nonumber[(4t-t^2)\lambda_{j}^{2}-4]^2
\Big[\Big(-\frac{n-1}{n-2}t^2\\
& &\nonumber+\frac{4n-6}{n-2}t-\frac{4}{n-2}\Big)\lambda_{j}^{2}+r_2\Big]\\
&\leq&\nonumber[(4t-t^2)\lambda_{j}^{2}-(4-t)]^2\Big(t-\frac{12}{13(n-2)}\Big)(4-t)\lambda_{j}^{2}\\
& &\nonumber+[(4t-t^2)\lambda_{j}^{2}-4]^2r_2\\
&\leq&\Big(t-\frac{12}{13(n-2)}\Big)(4-t)^3(t\lambda_{j}^{2}-1)^2\lambda_{j}^{2}+r_3.
\end{eqnarray}
Here $r_3=[(4t-t^2)\lambda_{j}^{2}-4]^2|r_2|$. From (\ref{3.25}), we
get
\begin{equation}\label{3.28}
(4t-t^2)\lambda_{j}^{2}>4.
\end{equation}
Hence $4t-t^2>0$, i.e., $0<t<4$. We define an auxiliary function
$$\xi(t):=\frac{(t-12p)(4-t)^3}{(1+t)^2},\,\,0<t<4,$$ where
$p=\frac{1}{13(n-2)}$. Letting $\frac{d\xi(t)}{dt}|_{t=t_0}=0$, we
get
$$t_0=\sqrt{9p^2+54p+6}+3p-2,$$ and
$$\xi_{0}:=\xi(t_0)=\frac{(2-2t_0+18p)(4-t_0)^2}{1+t_0}.$$ It is
easy to see that $\frac{d^2\xi(t)}{dt^2}|_{t=t_0}<0$. Hence $t_0$ is
the maximum point of $\xi(t)$. Since $t_0\geq\sqrt{6}+10p-2+3p$ and
$\zeta(t)=\frac{(2-2t+18p)(4-t)^2}{1+t}$ is decreasing in $t$, we
have
\begin{equation}\label{3.30}
\xi(t)\leq\xi_0\leq\frac{(6-2\sqrt{6}-8p)(6-\sqrt{6}-13p)^2}{\sqrt{6}-1+13p}=2C_3(n)^3.
\end{equation}
Here
$C_3(n)=\Big[\frac{(6-2\sqrt{6}-8p)(6-\sqrt{6}-13p)^2}{\sqrt{6}-1+13p}\Big]^{1/3}$.
By the definition of $G$, we have
\begin{equation}\label{3.31}
G\geq2(\lambda_i-\lambda_j)^2(1+\lambda_i\lambda_j)^2=2(t+1)^2\lambda_{j}^{2}(t\lambda_{j}^{2}-1)^2.
\end{equation}
From (\ref{3.27}) and (\ref{3.30}), we obtain
\begin{eqnarray}\label{3.32}
(\psi-S-4)^3&\leq&\nonumber\xi(t)(1+t)^2(t\lambda_{j}^{2}-1)^2\lambda_{j}^{2}+r_3\\
&\leq&\nonumber C_3(n)^3G+r_3\\
&\leq& (C_3(n)G^{1/3}+r_{3}^{1/3})^3,
\end{eqnarray}
i.e., $$\psi-S-4\leq C_3(n)G^{1/3}+r_{3}^{1/3}.$$

Therefore, at any point $x\in M$, (\ref{3.32}) holds.

\par By a direct computation, we have $C_3(7)\geq\lim\limits_{n\rightarrow\infty}C^{'}_{3}(n)$ and
$C^{'}_{3}(6)\leq C_3(6)$. Noting that $C^{'}_{3}(n)$ and $C_3(n)$
are both increasing in $n$, we get $C_3(n)\geq C^{'}_{3}(n)$, for
$n\geq6$.
\par Now we give proper upper bounds for $r_3$ and $q_2$. Since $4t-t^2\leq4t-t^2|_{t=2}=4$, we have
\begin{equation}\label{3.33}
(4t-t^2)\lambda_{j}^{2}-4\leq4\times\frac{16n}{15}-4,
\end{equation}
and
\begin{equation}\label{3.34}
|\lambda_i+\lambda_j|\leq\sqrt{2(\lambda_{i}^{2}+\lambda_{j}^{2})}\leq\sqrt{\frac{32n}{15}}.
\end{equation}
Then we obtain
\begin{eqnarray}\label{3.35}
r_3&=&\nonumber [(4t-t^2)\lambda_{j}^{2}-4]^2|r_2|\\
&\leq&\nonumber [(4t-t^2)\lambda_{j}^{2}-4]^2\Big[4q_1+\frac{n^2H^2}{n-2}+\frac{2nH}{n-2}|\lambda_i+\lambda_j|\Big]\\
&\leq& q_3,
\end{eqnarray}
where
$q_3=q_3(n,H)=(\frac{64n}{15}-4)^2\Big(4q_1+\frac{n^2H^2}{n-2}+\frac{2n}{n-2}\sqrt{\frac{32n}{15}}H\Big)$.
Since $x\geq y\geq0$, we have
\begin{equation}\label{3.36}
a^2=(x\lambda_{j}^{2}-1)^2\leq
x^2\lambda_{j}^4=\lambda_{i}^2\lambda_{j}^2\leq\Big(\frac{16}{15}n\Big)^2,
\end{equation}
and
\begin{equation}\label{3.37}
b^2=(y\lambda_{j}^{2}-1)^2\leq
y^2\lambda_{j}^4=\lambda_{k}^2\lambda_{j}^2\leq\Big(\frac{16}{15}n\Big)^2.
\end{equation}
Hence
\begin{equation}\label{3.38}
q_2=\Big[(4+\frac{3\epsilon}{2})a^2+(1+\frac{3}{2\epsilon})b^2\Big]q_1\leq
q_4,
\end{equation}
where $q_4=q_4(n,H)=\Big(\frac{16}{15}n\Big)^2p_1q_1$,
$p_1=5+\frac{3\epsilon}{2}+\frac{3}{2\epsilon}$. Thus, we have
\begin{eqnarray}\label{3.39}
3(A-2B)&\leq&\nonumber
\sum\limits_{i,j,k\,distinct}h_{ijk}^2(\lambda_{i}^2+\lambda_{j}^2+\lambda_{k}^2
-2\lambda_{i}\lambda_{j}-2\lambda_{j}\lambda_{k}-2\lambda_{i}\lambda_{k})\\
& &\nonumber+\sum\limits_{i\neq j}h_{iij}^2(\lambda_{j}^2-4\lambda_{i}\lambda_{j})\\
&\leq&\nonumber \sum\limits_{i,j,k\,distinct}h_{ijk}^2(S+4+C_3(n)G^{1/3}+2q_{2}^{1/3})\\
& &\nonumber+\sum\limits_{i\neq j}h_{iij}^2(S+4+C_3(n)G^{1/3}+r_{3}^{1/3})\\
&\leq& (S+4+C_3(n)G^{1/3}+q_5)|\nabla h|^2,
\end{eqnarray}
where $q_5=q_5(n,H)=2q_{4}^{1/3}+q_{3}^{1/3}$. This completes the
proof of Lemma \ref{lem2}.
\end{proof}

\section{Proof of Main Theorem}\label{sec4}

\par Let $x_n$ be the unique positive solution
of the following equation
$$\frac{n^3}{2(n-1)}x+\frac{n(n-2)}{2(n-1)}\sqrt{n^2x^2+4(n-1)x}=\frac{n}{15}-\frac{n}{23}.$$
Then we have
$$\frac{n^3}{2(n-1)}H^2+\frac{n(n-2)}{2(n-1)}\sqrt{n^2H^4+4(n-1)H^2}\leq\frac{n}{15}-\frac{n}{23},$$
for $H\leq\gamma_1(n):=\sqrt{x_n}$.
\par Now we are in a position to prove our main theorem.
\begin{proof}[Proof of Main Theorem]
For the low dimensional cases ($2\leq n\leq5$), it follows from
(\ref{2.9}), (\ref{2.11}) and (\ref{3.6}) that
\begin{equation}\label{4.30}
\int_{M}[(S-2n-\frac{3}{2})|\nabla
h|^2+\frac{3}{2}(A-2B)+\frac{9}{8}|\nabla S|^2-3nHC]dM\geq0,
\end{equation}
for $H\leq\gamma_1(n)$. By the definition of $F(\Phi)$, we get
\begin{eqnarray}\label{4.21}
\frac{F(\Phi)}{\Phi}&=&\nonumber\frac{\Phi^{2}-n\Phi-nH^2\Phi-nH\bar{f}_{3}}{\Phi}\\
&=&\nonumber\Phi-n-nH^2-\frac{nH\bar{f}_{3}}{\Phi}\\
&\geq&\nonumber\Phi-\beta_0(n,H)-nH^2-\frac{nH}{\Phi}\frac{n-2}{\sqrt{n(n-1)}}\Phi^{3/2}\\
&\geq&\nonumber\Phi-\beta_0(n,H)-nH^2-\frac{n(n-2)}{\sqrt{n(n-1)}}\sqrt{\frac{16}{15}n}H\\
&=&\Phi-\beta_0-q_6,
\end{eqnarray}
where $q_6=q_6(n,H)=nH^2+\frac{4n(n-2)}{\sqrt{15(n-1)}}H$.
Similarly, we have
\begin{equation}\label{4.21-1}
\frac{F(\Phi)}{\Phi}\leq\Phi-\beta_0+D_0+q_6,
\end{equation}
where $D_0=D_0(n,H)=\beta_0(n,H)-n$. On the other hand, we have
\begin{equation}\label{4.31}
\frac{\bar{f}_3}{\Phi}\leq\frac{n-2}{\sqrt{n(n-1)}}\sqrt{\Phi}\leq\frac{n-2}{\sqrt{n(n-1)}}\sqrt{\frac{16n}{15}}.
\end{equation}
It follows from (\ref{4.21-1}) and (\ref{4.31}) that
\begin{eqnarray}\label{4.32}
\Phi F(\Phi)&=&\nonumber(F(\Phi)+n\Phi+nH^2\Phi+nH\bar{f}_3)\frac{F(\Phi)}{\Phi}\\
&\leq&
\Big(\Phi-\beta_0+D_0+q_6+n+nH^2+\frac{4n(n-2)H}{\sqrt{15(n-1)}}\Big)F(\Phi).
\end{eqnarray}
Since
\begin{equation}\label{4.19}
|C|\leq \sqrt{S}|\nabla h|^2,\,\,n\leq S,
\end{equation}
we have
\begin{eqnarray}\label{4.20}
-3nH\int_MCdM&\leq&\nonumber3nH\int_M\sqrt{S}|\nabla
h|^2dM\\
&\leq&3\sqrt{n}H\int_MS|\nabla h|^2dM.
\end{eqnarray}
Here $C=\sum\limits_{i,j,k}h_{ijk}^2\lambda_{i}$. Combining
(\ref{3.2}), (\ref{4.30}), (\ref{4.32}), (\ref{4.20}) and the
condition $\beta(n,H)\leq S\leq\beta(n,H)+\frac{n}{23}$, we obtain
\begin{eqnarray}\label{4.33}
0&\leq&\nonumber\int_M \Big\{\frac{9}{4}(\Phi-\beta_0)F(\Phi)+\Big[\frac{9}{4}\Big(D_0+q_6+n+nH^2+\frac{4n(n-2)H}{\sqrt{15(n-1)}}\Big)\\
& &\nonumber +S-2n-\frac{3}{2}-\frac{9}{4}\Phi+3\sqrt{n}HS\Big]|\nabla h|^2+\frac{3}{2}(A-2B)\Big\}dM\\
&\leq&\nonumber\int_M \Big\{\Big[\frac{9}{4}\Big(\frac{n}{23}+n+D_0+q_6+nH^2+\frac{4n(n-2)H}{\sqrt{15(n-1)}}\Big)\\
& &\nonumber +S-2n-\frac{3}{2}-\frac{9}{4}\Phi+\frac{16n\sqrt{n}H}{5}\Big]|\nabla h|^2+\frac{3}{2}(A-2B)\Big\}dM\\
&=&\nonumber\int_M
\Big[\Big(\frac{n}{4}+\frac{9n}{92}-\frac{5S}{4}-\frac{3}{2}+q_7\Big)\\
& &\times|\nabla h|^2+\frac{3}{2}(A-2B)\Big]dM,
\end{eqnarray}
where
$q_7=q_7(n,H)=\frac{9}{4}\Big(D_0+q_6+2nH^2+\frac{4n(n-2)H}{\sqrt{15(n-1)}}\Big)+\frac{16n\sqrt{n}H}{5}$.
\par When $2\leq n\leq3$, substituting (\ref{2.12}) into
(\ref{4.33}), we get
\begin{eqnarray}\label{4.34}
0&\leq&\nonumber\int_M
\Big[\frac{n}{4}+\frac{9n}{92}+\Big(\frac{\sigma}{2}-\frac{5}{4}\Big)\Big(n+D_0+nH^2+\frac{n}{23}\Big)\\
& &\nonumber-\frac{3}{2}+q_7\Big]|\nabla h|^2dM\\
&\leq&\nonumber\int_M
\Big[\frac{3}{46}(24\sigma-67)+q_7\\
& &+\Big(\frac{\sigma}{2}-\frac{5}{4}\Big)(D_0+nH^2)\Big]|\nabla
h|^2dM.
\end{eqnarray}
There exists a positive constant $\gamma_2(n)$ depending only on $n$
such that
$q_7+\Big(\frac{\sigma}{2}-\frac{5}{4}\Big)(D_0+nH^2)\leq0.1$ for
$H\leq\gamma_2(n)$, which implies that the coefficient of the
integral in (\ref{4.34}) is negative. This together with
(\ref{4.34}) implies $\nabla h=0$.
\par When $4\leq n\leq5$, by Lemma \ref{lem4}, we have
$$3(A-2B)\leq[2+\eta(n)]S|\nabla h|^2,$$ where $\eta(4)=0.16$ and
$\eta(5)=0.23$. Similarly, we get
\begin{eqnarray}\label{4.35}
0&\leq&\nonumber\int_M
\Big[\frac{n}{4}+\frac{9n}{92}+\Big(\frac{2+\eta}{2}-\frac{5}{4}\Big)n
-\frac{3}{2}+q_7\Big]|\nabla h|^2dM\\
&=&\nonumber\int_M \Big(\frac{9n}{92}+\frac{n\eta}{2}
-\frac{3}{2}+q_7\Big)|\nabla h|^2dM\\
&\leq&\int_M \Big(\frac{45}{92}+\frac{5\times0.23}{2}
-\frac{3}{2}+q_7\Big)|\nabla h|^2dM.
\end{eqnarray}
We choose a positive constant $\gamma_2(n)$ depending only on $n$
such that $q_7\leq0.1$ for $H\leq\gamma_2(n)$. Therefore, the
coefficient of the integral in (\ref{4.35}) is negative. This
together with (\ref{4.35}) implies $\nabla h=0$.

\par For higher dimensional cases ($n\geq6$), we define
$u_{ijkl}=\frac{1}{4}(h_{ijkl}+h_{lijk}+h_{klij}+h_{jkli})$. Notice
that $u_{ijkl}$ is symmetric in $i,j,k,l$. Then we have
\begin{eqnarray}\label{4.13}
\sum\limits_{i,j,k,l}(h_{ijkl}^{2}-u_{ijkl}^{2})&=&
\nonumber\frac{1}{16}\sum\limits_{i,j,k,l}[(h_{ijkl}-h_{lijk})^2+(h_{ijkl}-h_{klij})^2\\
& &\nonumber +(h_{ijkl}-h_{jkli})^2+(h_{lijk}-h_{klij})^2\\
& &\nonumber +(h_{lijk}-h_{jkli})^2+(h_{klij}-h_{jkli})^2]\\
&\geq&\nonumber\frac{6}{16}\sum\limits_{i\neq j}[(h_{ijij}-h_{jiji})^2+(h_{jiji}-h_{ijij})^2]\\
&=&\frac{3}{4}G,
\end{eqnarray}
\begin{equation}\label{4.14}
\sum\limits_{i,j,k,l}u_{ijkl}^{2}\geq\sum\limits_{i}u_{iiii}^{2}+3\sum\limits_{i\neq
j}u_{iijj}^2.
\end{equation}
Since $\sum\limits_{i}\mu_{i}=0$, $\sum\limits_{i}\mu_{i}^{2}=\Phi$
and $\sum\limits_{i,j}(\mu_{i}+\mu_{j})u_{iijj}=-F(\Phi)$, using
Proposition \ref{prop2}, we obtain
\begin{equation}\label{4.15}
|\nabla^2 h|^2\geq \frac{3}{4}G+\frac{3F^2(\Phi)}{2(n+4)\Phi}.
\end{equation}
For $0<\theta<1$, $H\leq\gamma_1(n)$ and $\beta(n,H)\leq
S\leq\beta(n,H)+\delta(n)$, we have
\begin{equation}\label{4.16}
\int_M|\nabla^2
h|^2dM\geq\Big(\frac{3(1-\theta)}{4}+\frac{3\theta}{4}\Big)\int_M
GdM+\int_M \frac{3F^2(\Phi)}{2(n+4)\Phi}dM.
\end{equation}
Combining (\ref{2.9}), (\ref{3.6}) and (\ref{4.16}), using Lemma
\ref{lem2} and Young's inequality, we drive the following
inequality.
\begin{eqnarray}\label{4.17}
&
&\nonumber\frac{3(1-\theta)}{4}\int_MGdM+\int_M \frac{3F^2(\Phi)}{2(n+4)\Phi}dM\\
&\leq&\nonumber\int_M\Big[(S-2n-3)|\nabla
h|^2+\frac{3}{2}|\nabla S|^2\\
& &\nonumber+3(A-2B)-3nHC-\frac{3\theta}{4}G\Big]dM\\
&=&\nonumber\int_M(S-2n-3+\frac{3\theta}{2})|\nabla
h|^2dM+(3-\frac{3\theta}{2})\int_M(A-2B)dM\\
& &\nonumber+(\frac{3}{2}-\frac{3\theta}{8})\int_M|\nabla
S|^2dM-3nH\int_MCdM\\
&\leq&\nonumber\int_M(S-2n-3+\frac{3\theta}{2})|\nabla
h|^2dM\\
&
&\nonumber+(1-\frac{\theta}{2})\int_M\Big(S+4+C_3(n)G^{1/3}+q_5\Big)|\nabla
h|^2dM\\
& &\nonumber+(\frac{3}{2}-\frac{3\theta}{8})\int_M|\nabla
S|^2dM-3nH\int_MCdM\\
&\leq&\nonumber\int_M\Big[(2-\frac{\theta}{2})S-2n+1-\frac{\theta}{2}+(1-\frac{\theta}{2})q_5\Big]|\nabla
h|^2dM\\
& &\nonumber+\frac{3(1-\theta)}{4}\int_MGdM+C_4\int_M|\nabla
h|^3dM\\
& &+(\frac{3}{2}-\frac{3\theta}{8})\int_M|\nabla
S|^2dM-3nH\int_MCdM,
\end{eqnarray}
where
$C_4=C_4(n,\theta)=\frac{4}{9}C_3(n)^{3/2}(1-\frac{\theta}{2})^{3/2}(1-\theta)^{-1/2}$.
\par From (\ref{3.1}), we get
\begin{equation}\label{4.18-1}
\frac{1}{2}|\nabla\phi|\Delta\Phi=-F(\Phi)|\nabla\phi|+|\nabla\phi|^3.
\end{equation}
This together with the divergence theorem and Cauchy-Schwarz's
inequality implies
\begin{eqnarray}\label{4.18}
\int_M|\nabla h|^3dM
&\leq&\nonumber\int_MF(\Phi)|\nabla\phi|dM+\epsilon\int_M|\nabla^2\phi|^2dM\\
& &+\frac{1}{16\epsilon}\int_M|\nabla\Phi|^2dM,
\end{eqnarray}
for $\epsilon>0$. From (\ref{4.21}), we have
\begin{equation}\label{4.22}
\int_M \frac{3F^2(\Phi)}{2(n+4)\Phi}dM\geq\int_M
\frac{3}{2(n+4)}(\Phi-\beta_0-q_6)F(\Phi)dM.
\end{equation}
Combining (\ref{4.20}), (\ref{4.17}), (\ref{4.18}) and (\ref{4.22}),
we obtain
\begin{eqnarray}\label{4.23}
0&\leq&\nonumber\int_M\Big[(2-\frac{\theta}{2}+3\sqrt{n}H)S-2n+1-\frac{\theta}{2}+(1-\frac{\theta}{2})q_5\Big]|\nabla
h|^2dM\\
&
&\nonumber+C_4\Big(\int_MF(\Phi)|\nabla\phi|dM+\epsilon\int_M|\nabla^2\phi|^2dM+\frac{1}{16\epsilon}\int_M|\nabla\Phi|^2dM\Big)\\
& &+(\frac{3}{2}-\frac{3\theta}{8})\int_M|\nabla\Phi|^2dM-\int_M
\frac{3}{2(n+4)}(\Phi-\beta_0-q_6)F(\Phi)dM.
\end{eqnarray}
It follows from (\ref{2.9}) and (\ref{2.12}) that
\begin{eqnarray}\label{4.24}
& &\nonumber\int_M|\nabla^2 h|^2dM\\
&\leq&\nonumber\int_M[(\sigma+1+3\sqrt{n}H)S-2n-3]|\nabla
h|^2dM\\
& &+\frac{3}{2}\int_M|\nabla S|^2dM.
\end{eqnarray}
From (\ref{3.2}), we get
\begin{equation}\label{4.25}
\int_M\frac{1}{2}|\nabla\Phi|^2dM=\int_M(\Phi-\beta_0)F(\Phi)dM+\int_M(\beta_0-\Phi)|\nabla\phi|^2dM.
\end{equation}
Thus, (\ref{4.23}) becomes
\begin{eqnarray}\label{4.26}
0&\leq&\nonumber\int_M\Big\{\Big[2-\frac{\theta}{2}+3\sqrt{n}H+\epsilon C_4(\sigma+1+3\sqrt{n}H)\Big](\Phi-\beta_0+\beta)\\
& &\nonumber-2n+1-\frac{\theta}{2}+(1-\frac{\theta}{2})q_5-\epsilon
C_4(2n+3)\Big\}|\nabla\phi|^2dM\\
&
&\nonumber+\Big(\frac{3}{2}-\frac{3\theta}{8}+\frac{C_4}{16\epsilon}+
\frac{3\epsilon C_4}{2}\Big)\int_M|\nabla
S|^2dM\\
& &\nonumber+C_4\int_MF(\Phi)|\nabla\phi|dM-\int_M \frac{3}{2(n+4)}(\Phi-\beta_0-q_6)F(\Phi)dM\\
&=&\nonumber\int_M\Big\{D\Big[2-\frac{\theta}{2}+3\sqrt{n}H+\epsilon C_4(\sigma+1+3\sqrt{n}H)\Big]+1-\frac{\theta}{2}(n+1)\\
&
&\nonumber+3n\sqrt{n}H+(1-\frac{\theta}{2})q_5+\frac{3}{2(n+4)}q_6+\epsilon
C_4(\sigma
n+3n^{\frac{3}{2}}H-n-3)\Big\}\\
&
&\nonumber\times|\nabla\phi|^2dM+C_4\int_MF(\Phi)|\nabla\phi|dM-\Big[1-\frac{\theta}{4}+\frac{C_4}{8\epsilon}-3\sqrt{n}H\\
& &\nonumber+\epsilon
C_4(2-\sigma-3\sqrt{n}H)\Big]\int_M(\Phi-\beta_0)|\nabla\phi|^2dM\\
& &+\Big(3-\frac{3\theta}{4}+\frac{C_4}{8\epsilon}+3\epsilon
C_4-\frac{3}{2(n+4)}\Big)\int_M(\Phi-\beta_0)F(\Phi)dM,
\end{eqnarray}
where $D=D(n,H)=\beta(n,H)-n$. On the other hand, it follows from
(\ref{4.21-1}) that
\begin{eqnarray}\label{4.27}
C_4\int_MF(\Phi)|\nabla\phi|dM
&\leq&\nonumber2C_4(\beta_0+\delta)\epsilon\int_MF(\Phi)dM\\
& &\nonumber+\frac{C_4}{8(\beta_0+\delta)\epsilon}\int_MF(\Phi)|\nabla\phi|^2dM\\
&\leq&\nonumber2C_4(\beta_0+\delta)\epsilon\int_MF(\Phi)dM\\
& &\nonumber+\int_M\frac{C_4\Phi}{8(\beta_0+\delta)\epsilon}(\Phi-\beta_0+D_0+q_6)|\nabla\phi|^2dM\\
&\nonumber\leq&2C_4(n+D_0+\delta)\epsilon\int_MF(\Phi)dM\\
&
&+\int_M\frac{C_4}{8\epsilon}(\Phi-\beta_0+D_0+q_6)|\nabla\phi|^2dM.
\end{eqnarray}
Substituting (\ref{4.27}) into (\ref{4.26}), we get
\begin{eqnarray}\label{4.28}
0&\leq&\nonumber\int_M\Big\{D\Big[2-\frac{\theta}{2}+3\sqrt{n}H+\epsilon C_4(\sigma+1+3\sqrt{n}H)\Big]\\
&
&\nonumber+1-\frac{\theta}{2}(n+1)+3n\sqrt{n}H+(1-\frac{\theta}{2})q_5+\frac{3}{2(n+4)}q_6\\
& &\nonumber+\epsilon C_4(\sigma
n+3n^{\frac{3}{2}}H-n-3)\Big\}|\nabla\phi|^2dM+2C_4(n+D_0+\delta)\epsilon\\
&
&\nonumber\times\int_MF(\Phi)dM+\int_M\frac{C_4}{8\epsilon}(\Phi-\beta_0+D_0+q_6)|\nabla\phi|^2dM-\Big[1-\frac{\theta}{4}\\
& &\nonumber+\frac{C_4}{8\epsilon}-3\sqrt{n}H+\epsilon
C_4(2-\sigma-3\sqrt{n}H)\Big]\int_M(\Phi-\beta_0)|\nabla\phi|^2dM\\
&
&\nonumber+\Big(3-\frac{3\theta}{4}+\frac{C_4}{8\epsilon}+3\epsilon
C_4-\frac{3}{2(n+4)}\Big)\int_M(\Phi-\beta_0)F(\Phi)dM\\
&\leq&\nonumber\int_M\Big\{D\Big[2-\frac{\theta}{2}+3\sqrt{n}H+\epsilon C_4(\sigma+1+3\sqrt{n}H)\Big]+3n^{\frac{3}{2}}\epsilon C_{4}H\\
&
&\nonumber+3n\sqrt{n}H+(1-\frac{\theta}{2})q_5+\frac{3}{2(n+4)}q_6+2C_4D_0\epsilon+\frac{C_4}{8\epsilon}(D_0+q_6)\\
& &\nonumber+1-\frac{\theta}{2}(n+1)+\epsilon C_4(\sigma
n+n-3+5\delta)+\frac{C_4\delta}{8\epsilon}\\
&
&\nonumber+\Big(3-\frac{3\theta}{4}-\frac{3}{2(n+4)}\Big)\delta+3\sqrt{n}\delta
H+3\epsilon C_{4}\sqrt{n}\delta H\Big\}|\nabla\phi|^2dM
\\
& &-\Big[1-\frac{\theta}{4}+\epsilon
C_4(2-\sigma)\Big]\int_M(\Phi-\beta_0)|\nabla\phi|^2dM.
\end{eqnarray}
Let $\epsilon=\sqrt{\frac{\delta}{8(\sigma n+n-3+5\delta)}}$ and
$\theta=0.86$. Then
$$C_4(n)=\frac{4}{9}\times0.57^{3/2}\times0.14^{-1/2}\times\sqrt{\frac{3-\sqrt{6}-4p}{\sqrt{6}-1+13p}}(6-\sqrt{6}-13p),$$
where $p=\frac{1}{13(n-2)}\leq\frac{1}{13}$. Notice that $C_4(n)$ is
increasing in $n$. Thus, we have
$C_4(n)\leq\lim\limits_{l\rightarrow\infty}C_4(l)\approx1.1185<1.1186$,
$0<\epsilon\leq\frac{1}{6}$ and $\sigma=\frac{\sqrt{17}+1}{2}$. So,
$0.785+C_4\epsilon(2-\sigma)>0$. Let $\delta(n)=\frac{n}{23}$. There
exists a positive constant $\gamma_2(n)$ depending only on $n$ such
that $q_8\leq0.001$, for $H\leq\gamma_2(n)$. Here
$q_8=q_8(n,H)=D\Big[2-\frac{\theta}{2}+3\sqrt{n}H+\epsilon
C_4(\sigma+1+3\sqrt{n}H)\Big]+3n^{\frac{3}{2}}\epsilon C_{4}H
+3n\sqrt{n}H+(1-\frac{\theta}{2})q_5+\frac{3}{2(n+4)}q_6+2C_4D_0\epsilon+\frac{C_4}{8\epsilon}(D_0+q_6)
+3\sqrt{n}\delta H+3\epsilon C_{4}\sqrt{n}\delta H$. Then we have
\begin{eqnarray}\label{4.29}
0&\leq&\nonumber\Big[1-\frac{\theta}{2}(n+1)+\epsilon C_4(\sigma
n+n-3+5\delta)+\frac{C_4\delta}{8\epsilon}\\
&
&\nonumber+\Big(3-\frac{3\theta}{4}-\frac{3}{2(n+4)}\Big)\delta+q_8\Big]\int_M|\nabla\phi|^2dM\\
&\leq&\nonumber\Big(-0.43n+0.57+C_4\sqrt{\frac{\delta}{2}(\sigma
n+n-3+5\delta)}\\&
&+(2.355-\frac{3}{2(n+4)})\delta+0.001\Big)\int_M|\nabla \phi|^2dM.
\end{eqnarray}
\par When $n\geq75$, we have
\begin{eqnarray}\label{4.29-1}
& &\nonumber C_4\sqrt{\frac{\delta}{2}(\sigma
n+n-3+5\delta)}\\
&\leq&\nonumber 1.1186\times\sqrt{\frac{n}{46}(\sigma
n+n+\frac{5n}{23})}\\
&\leq&0.3207n.
\end{eqnarray}
Hence
\begin{eqnarray}\label{4.29-2}
0&\leq&\nonumber\Big[-0.43n+0.571+0.3207n\\
& &+\Big(2.355-\frac{3}{2(n+4)}\Big)\frac{n}{23}\Big]\int_M|\nabla
\phi|^2dM.
\end{eqnarray}
Since $\frac{3}{2(n+4)}\frac{n}{23}\geq0.061$, we have
\begin{equation}\label{4.29-3}
0\leq\Big(-0.43n+0.571+0.3207n+\frac{2.355n}{23}-0.061\Big)\int_M|\nabla
\phi|^2dM.
\end{equation}
Notice that the coefficient of the integral in (\ref{4.29-3}) is
negative. This shows that $\nabla \phi=0$.
\par When $6\leq n\leq74$, we note that
$C_4(n)=\frac{4}{9}\times0.57^{3/2}\times0.14^{-1/2}\times\sqrt{\frac{3-\sqrt{6}-4p}{\sqrt{6}-1+13p}}(6-\sqrt{6}-13p)$,
$p=\frac{1}{13(n-2)}$ and $\delta=\frac{n}{23}$. This implies that
the coefficient of the integral in (\ref{4.29}) is negative. Thus,
we have $\nabla \phi=0$.
\par
So, we conclude that $$\nabla \phi=\nabla h=0,\,\, \mbox{\ for\ }
\,\, n\geq 2 \,\, \mbox{\ and\ } \,\, H\leq \gamma(n),$$ where
$\gamma(n)=\min\{\gamma_1(n),\gamma_2(n)\}$. This together with
(\ref{3.3}) and (\ref{3.4}) implies that $F(\Phi)=0$ and
$\Phi=\beta_0(n,H)$.

\par When $H=0$, $\Phi=\beta_0(n,H)$ becomes $S=n$, i.e.,  $M$ is one of the Clifford torus\,
$$\mathbb{S}^{k}(\sqrt{\frac{k}{n}})\times
\mathbb{S}^{n-k}(\sqrt{\frac{n-k}{n}}), \,1\le k\le n-1.$$

\par When $H\neq0$, using Proposition \ref{prop1} (i), we get
$$\lambda_1=\dots=\lambda_{n-1}=H-\sqrt{\frac{\beta(n,H)-nH^2}{n(n-1)}},$$
$$\lambda_n=H+\sqrt{\frac{(n-1)(\beta(n,H)-nH^2)}{n}}.$$
Therefore, $M$ is the Clifford torus
$$\mathbb{S}^{1}(\frac{1}{\sqrt{1+\mu^2}})\times \mathbb{S}^{n-1}(\frac{\mu}{\sqrt{1+\mu^2}})$$
in ${S}^{n+1}$, where $\mu=\frac{nH+\sqrt{n^2H^2+4(n-1)}}{2}$.

This completes the proof of Main Theorem.
\end{proof}
\par Finally we would like to propose the following conjectures.

\begin{cconja}
Let $M$ be a closed hypersurface with constant mean curvature $H$ in $\mathbb{S}^{4}$. We have\\
(i) If $\beta(3,H)\leq S\leq 6+9H^2$, then $S\equiv\beta(3,H)$ or
$S\equiv 6+9H^2$, i.e., $M$ is a Clifford torus or a tube
of the Veronese surface.\\
(ii) If $S\geq 6+9H^2$, then $S\equiv 6+9H^2$, i.e., $M$ is a tube
of the Veronese surface.
\end{cconja}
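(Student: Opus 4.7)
The plan is to attack both parts simultaneously by exploiting the special features of dimension three, where the second fundamental form has at most three distinct principal curvatures $\lambda_1,\lambda_2,\lambda_3$ constrained by $\lambda_1+\lambda_2+\lambda_3=3H$. The backbone is the Simons identities (\ref{2.8})--(\ref{2.9}), the $G$-integral identity (\ref{3.6}), Proposition \ref{prop2}, and the two-parameter integral estimate culminating in (\ref{4.28}) from the proof of the Main Theorem. What must be redone is the pointwise estimate for $A-2B$, since Lemma \ref{lem2} is stated only for $n\geq 6$ and Lemma \ref{lem4} only for $n=4,5$.

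For part (i), I would first derive a sharp three-eigenvalue bound of the form $3(A-2B)\leq [S+4+\Psi(H,G)]|\nabla h|^2$ by parametrizing $(\lambda_1,\lambda_2,\lambda_3)$ on the hyperplane $\sum_i\lambda_i=3H$ and optimizing under the constraint $S\leq 6+9H^2$. Substituting this bound into the integral machinery that produced (\ref{4.28})--(\ref{4.29}), with the free parameters $\theta,\epsilon$ retuned for $n=3$, should yield an inequality whose coefficient of $\int_M|\nabla\phi|^2\,dM$ is strictly negative whenever $S$ lies in the open interval $(\beta(3,H),\,6+9H^2)$. This would force $\nabla h\equiv 0$, and then the equality analysis of Proposition \ref{prop1}(i) together with $F(\Phi)=0$ splits the rigidity conclusion into two cases: either two eigenvalues coincide (giving $S\equiv\beta(3,H)$ and a Clifford torus) or the three eigenvalues are mutually distinct (giving $S\equiv 6+9H^2$ and a tube of the Veronese surface).

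For part (ii), where $S\geq 6+9H^2$, the sign of $F(\Phi)$ no longer yields what is needed directly, so I would instead work with the shifted quantity $\tilde\Phi:=\Phi-(6+6H^2)$ and attempt a differential inequality of the form $\tfrac{1}{2}\Delta\tilde\Phi_+\geq F_*(\Phi)-c\,|\nabla\phi|^2$ on the set $\{\tilde\Phi>0\}$, where $F_*$ is a polynomial that is non-negative past the Veronese threshold, for the same sort of algebraic reason that $F(\Phi)\geq 0$ past the Clifford threshold. Integration over $M$, combined with the identity (\ref{3.3}) and part (i) applied on the complementary sub-level set, should propagate the equality and force $S\equiv 6+9H^2$ globally.

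The decisive obstacle, and the reason this remains a conjecture, is that the cubic inequality of Proposition \ref{prop1}(i) is sharp only at the Clifford configuration in which two eigenvalues coincide, whereas the Cartan--Veronese tube realizes the opposite extreme with three mutually distinct principal curvatures. Consequently every application of this cubic bound in the style of (\ref{4.21}) or of Ding--Xin is strictly slack at the conjectured upper equality case. Any proof of (i) or (ii) therefore requires a new sharp algebraic inequality in three variables that is simultaneously tight at both isoparametric models in $\mathbb{S}^4$, together with a corresponding sharpening of Lemma \ref{lem2} whose equality set contains both the Clifford torus and the Veronese tube. Producing this two-point-optimal estimate is where I expect the entire approach to stand or fall.
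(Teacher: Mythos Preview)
The statement you are addressing is \emph{Conjecture A}, which the paper explicitly poses as an open problem and does \emph{not} prove. There is no ``paper's own proof'' to compare against: immediately after stating the conjecture the authors remark that even the minimal case $H=0$ is still open, and they note only that the much more restrictive variant with \emph{both} constant mean curvature and constant scalar curvature was settled by Almeida--Brito and Chang. So your proposal cannot be correct as a proof, and you yourself acknowledge this in your final paragraph.

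That said, your diagnosis of the obstruction is accurate and well aligned with the paper's perspective. The integral machinery of Section~4 (the inequalities leading to (\ref{4.28})--(\ref{4.29})) is calibrated to detect the Clifford threshold $\beta(n,H)$ via the one-sided sharpness of Proposition~\ref{prop1}(i), and it is precisely this one-sidedness that prevents the method from seeing the Veronese tube at $S=6+9H^2$. Your observation that a proof would require a new algebraic inequality sharp at \emph{both} isoparametric models is the essential point; the paper does not supply such an inequality, and neither does your proposal. The sketch for part (ii) via a shifted quantity $\tilde\Phi$ and a hypothetical $F_*$ is speculative: no polynomial $F_*$ with the stated sign property past the Veronese threshold is exhibited, and without it the argument has no content. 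In short, you have correctly identified why the conjecture resists the paper's techniques, but you have not supplied the missing ingredient, and neither has anyone else.
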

\par In particular, if $H=0$, the problem above is still open. In the case where $M$ is a closed hypersurface with constant mean
curvature and constant scalar curvature in $\mathbb{S}^{4}$,
Conjecture A was solved by Almeida-Brito \cite{AB} and Chang
\cite{C2}. It is well known that the possible values of the squared
length of the second fundamental forms of all closed isoparametric
hypersurfaces with constant mean curvature $H$ in the unit sphere
form a discrete set $I(\subset\mathbb{R}).$ The following conjecture
can be viewed as a general version of the Chern conjecture.
\begin{cconjb}  Let $M$ be an
$n$-dimensional closed hypersurface with constant mean curvature $H$
in the unit sphere $\mathbb{S}^{n+1}$. \\
(i) Assume that $a<b$ and $[a,b]\bigcap I=\{a,b\}$. If $a\leq S \leq
b$, then $S\equiv a$ or $S\equiv b,$  and $M$ is an isoparametric
hypersurface in $\mathbb{S}^{n+1}$.\\
(ii) Set $c=\sup_{t\in I} t.$ If $S\geq c,$ then $S\equiv c$, and
$M$ is an isoparametric hypersurface in $\mathbb{S}^{n+1}$.
 \end{cconjb}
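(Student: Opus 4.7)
The plan is to substantially extend the parameter/integral method developed in the proof of the Main Theorem, combining it with Münzner's classification of isoparametric hypersurfaces. First I would organize the discrete set $I = \{s_0 < s_1 < s_2 < \ldots\}$ of squared second-fundamental-form norms achievable by closed isoparametric CMC hypersurfaces in $\mathbb{S}^{n+1}$ (for fixed $H$). Note that $\alpha(n,H) = s_1$ and $\beta(n,H) = s_2$ already appear in Theorems C and D, and that the polynomial $F(\Phi) = \Phi^2 - n\Phi - nH^2\Phi - nH\bar f_3$ introduced in (\ref{3.1}) vanishes at each value of $I$ (with alternating sign between consecutive ones), because each such value is realized by configurations with very few distinct principal curvatures. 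The central identity $\int_M F(\Phi)\,dM = \int_M |\nabla\phi|^2 dM$ of (\ref{3.3}) will again drive the argument.

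For part (i), given consecutive $a < b$ in $I$ and assuming $a \leq S \leq b$, the plan is to run the full Simons-type chain of Section 4: start from (\ref{2.8}) and (\ref{2.9}), combine with the fourth-order identity (\ref{4.15}) obtained via Proposition \ref{prop2}, and form a parameterized integral combination (with a weight $\theta$ and an $\epsilon$ for Young's inequality, in the spirit of (\ref{4.17})--(\ref{4.28})) in which the leading $S$-coefficient can be forced negative on the entire interval $[a,b]$. A key input would be a refined pointwise estimate of the form $3(A-2B) \leq [\kappa(S) + C(n)G^{1/3} + q(n,H)]|\nabla h|^2$, generalizing Lemma \ref{lem2} from $S \leq \tfrac{16}{15}n$ to each gap interval. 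If this goes through, then $|\nabla\phi| = 0$ and $S$ is constant; one then invokes constant-$S$ classification results (Almeida--Brito, Chang, and related) to conclude isoparametric.

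For part (ii) with $c = \sup I$, the situation is one-sided and should be softer: here I would show that $F(\Phi) \leq 0$ whenever $\Phi \geq c - nH^2$, by a sharpening of Proposition \ref{prop1}(i) that uses the fact that the extremizer of $|\sum a_i^3|$ corresponds exactly to the $g=2$ isoparametric configuration while the $g\geq 3$ isoparametric configurations provide secondary barriers further up. Together with $\int F(\Phi) = \int |\nabla\phi|^2 \geq 0$, this yields $F(\Phi) \equiv 0$ and $\Phi \equiv c - nH^2$; then $S$ constant together with the Simons--Lawson--Peng--Terng circle of identities (in particular $|\nabla h|^2$ being a non-negative linear combination of $(A-2B)$, $S|\nabla h|^2$, and $G$ modulo integration by parts) should force an algebraic constraint matching precisely an isoparametric family.

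The principal obstacle is the pointwise bound on $A - 2B$ for $S$ far above $n$. Lemma \ref{lem2} was built on the two-distinct-eigenvalue geometry of the Clifford torus, in particular on the subpolynomial $\varphi \leq S + 4 + C_3(n)G^{1/3} + q_5$; but for the $g = 3, 4, 6$ isoparametric examples the second fundamental form is not parallel and has several eigenvalue groups with Münzner-constrained multiplicities, so $G$ can vanish to higher order without forcing $\nabla h = 0$. What is needed is a new refined Kato inequality tailored to $g$-group splittings, combined with a version of Proposition \ref{prop2} that exploits the constraint $\sum_i b_i^{(\alpha)} = 0$ within each eigenvalue group rather than only on the whole trace. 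Producing such an estimate that is tight enough to close the gap across an arbitrary interval of $I$ is precisely the reason Conjecture B is expected to remain difficult even where the Chern conjecture for minimal hypersurfaces has recently advanced; however, for the very first gap above $\beta(n,H)$ --- where the next element of $I$ corresponds to the Cartan $g=3$ family in dimensions $3, 6, 12, 24$ --- the scheme above has a realistic chance of succeeding with only quantitative refinements of the present $\frac{n}{23}$ estimate.
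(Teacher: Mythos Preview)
The statement you are attempting to prove is \emph{Conjecture B}, which the paper explicitly poses as an open problem in the final paragraphs; there is no proof in the paper to compare against. The only remark the authors make is that the single gap $[nH^2,\alpha(n,H)]$ is settled by Theorem~C, and the Main Theorem addresses part of the next gap under a smallness hypothesis on $H$. Everything beyond that is left open.

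Your proposal also contains substantive errors that would block the outlined strategy even as a research program. The function $F(\Phi)=\Phi^2-n\Phi-nH^2\Phi-nH\bar f_3$ is not a polynomial in $\Phi$ alone, and it does \emph{not} vanish on isoparametric hypersurfaces with $g\ge 3$ principal curvatures: for such hypersurfaces $\Phi$ is constant, so (\ref{3.1}) gives $F(\Phi)=|\nabla\phi|^2$ pointwise, and this is strictly positive because $\nabla h\ne 0$ once $g\ge 3$. Consequently your claimed sign pattern (``$F$ vanishes at each value of $I$ with alternating sign between consecutive ones'') is false, and the part~(ii) argument (``$F(\Phi)\le 0$ for $\Phi\ge c-nH^2$, hence $\int F(\Phi)=\int|\nabla\phi|^2\ge 0$ forces equality'') collapses immediately, since $F(\Phi)>0$ at the very configuration realizing $c$. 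Likewise, the part~(i) scheme hinges on forcing $\nabla\phi=0$; but on a gap $[a,b]$ with $a$ or $b$ coming from a $g\ge 3$ family, the model endpoint itself has $\nabla\phi\ne 0$, so no integral inequality of the type (\ref{4.28}) with a strictly negative coefficient in front of $\int|\nabla\phi|^2$ can hold there. This is precisely why the techniques of Sections~3--4, which are calibrated to the $g=2$ Clifford geometry (where $G$ detects deviation from two principal curvatures and $\nabla h=0$ is the target), do not transfer to higher gaps without a fundamentally new idea, not merely ``quantitative refinements of the present $\frac{n}{23}$ estimate.''
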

\par In particular, if $a=nH^2$ and $b=\alpha(n,H)$, Theorem
C provides an affirmative answer to Conjecture B (i).

\bibliographystyle{amsplain}

\end{document}